\declaretheorem[name=Theorem,numberwithin=section]{thm}
\newtheorem*{thm*}{Theorem}
\newtheorem{prop}[thm]{Proposition}
\newtheorem{lem}[thm]{Lemma}
\theoremstyle{definition}
\newtheorem{defn}[thm]{Definition}
\newtheorem{ex}[thm]{Example}
\theoremstyle{remark}
\newtheorem{rem}[thm]{Remark}
\newcommand{\QQ}{\mathbb{Q}}
\newcommand{\HH}{\mathcal{H}}
\newcommand{\PPP}{\mathcal{P}}
\newcommand{\RR}{\mathbb{R}}
\newcommand{\CC}{\mathbb{C}}
\newcommand{\OO}{\mathcal{O}}
\newcommand{\ord}{\operatorname{ord}}
\newcommand{\PP}{\mathbb{P}}
\newcommand{\Pic}{\text{Pic}}
\newcommand{\Mg}{\mathcal{M}_g}
\newcommand{\Mgn}{\mathcal{M}_{g,n}}
\newcommand{\Mgb}{\overline{\mathcal{M}}_g}
\newcommand{\Mgnb}{\overline{\mathcal{M}}_{g,n}}
\newcommand{\Eff}{\overline{\text{Eff}}}
\newcommand{\rk}{\text{rk}}
\newcommand{\res}{\text{res}}
\newcommand{\SL}{\mbox{SL}}
\newcommand{\Div}{\operatorname{Div}}
\newtheorem{Thm*}{Theorem*}
\theoremstyle{definition}
\title{On the effective cone of $\overline{\mathcal{M}}_{g,n}$
}
\author{Scott Mullane\\
}
\date{\today}
\begin{document}
\thispagestyle{empty}

\maketitle

\begin{abstract}
For every $g\geq 2$ and $n\geq g+1$ we exhibit infinitely many extremal effective divisors in $\overline{\mathcal{M}}_{g,n}$ coming from the strata of abelian differentials.
\end{abstract}

\setcounter{tocdepth}{1}

\tableofcontents

\section{Introduction}
The space of holomorphic differentials or one-forms on a curve provides the canonical map of the curve into projective space. Hence the differentials encode a wealth of information about a curve. Instead of considering all differentials on a fixed curve, we can extract the information held by differentials from a different perspective. By fixing the type of differential and also allowing the curve to vary, we obtain the moduli space of abelian differentials $\HH(\kappa)$ consisting of pairs $(C,\omega)$ where $\omega$ is a holomorphic or meromorphic differential on a smooth curve $C$ and the multiplicity of the zeros and poles of $\omega$ is fixed of type $\kappa$, an integer partition of $2g-2$.  Equivalently, a flat metric on $C$ with a finite number of singularities gives a translation atlas on the curve away from these singularities. Hence a pair $(C,\omega)$ is also known as a translation surface and  can be represented as polygons in the plane with parallel side identifications. Previous seminal work exposes the fundamental algebraic attributes of these spaces~\cite{McMullen}\cite{KontsevichZorich}\cite{EskinMirzakhani}\cite{EskinMirzakhaniMohammadi}\cite{Filip}.

Subvarieties of codimension one are the vanishing locus of a global section of a line bundle and are known as effective divisors. Geometrically defined effective divisors of Brill-Noether and Petri type were first used by~\cite{HarrisMumford}\cite{Harris}\cite{EisenbudHarrisKodaira} to show that $\Mgb$ is of general type for $g>23$. This was extended by Koszul divisors to $g\geq 22$ by~\cite{Farkas23}\cite{FarkasKoszul}. Similar results were obtained by~\cite{Logan} for $\Mgnb$ with $n>0$, which were again sharpened by~\cite{FarkasKoszul}.

For any fixed holomorphic signature $\kappa$, the locus of curves admitting a one-form of type $\kappa$ forms a subvariety in $\Mg$. In~\cite{Mullane} the author presents a closed formula for the divisor class in $\Mgb$ of the closure of all possible codimension one loci of this type. This generalised the two known cases of the loci of curves with an extremal Weierstrass point~\cite{Diaz} and the loci of curves with a subcanonical pencil~\cite{Teixidor}. In $\Mgnb$, effective divisors from the strata of abelian differentials have been previously used under different guises in a number of places, for example, divisors of Brill-Noether type~\cite{Logan}, the pullback of the theta divisor~\cite{Muller}\cite{GruZak}, the loci of points in the support of an odd theta characteristic~\cite{FarkasBN} and the anti-ramification locus~\cite{FarkasVerraTheta}. In~\cite{Mullane2} the author uses the degeneration of abelian differentials~\cite{BCGGM} and maps between moduli spaces to compute many new classes of divisors of this type as well as efficiently reproducing these previously known classes.

The stratum of canonical divisors with signature $\kappa=(k_1,...,k_m)$ is a subvariety of $\mathcal{M}_{g,m}$,
\begin{equation*}
\PPP(\kappa):=\{[C,p_1,...,p_m]\in {\mathcal{M}}_{g,m}   \hspace{0.15cm}| \hspace{0.15cm}k_1p_1+...+k_mp_m\sim K_C\}.
\end{equation*}
The moduli space of twisted canonical divisors of type $\kappa$, a closure of this variety in $\overline{\mathcal{M}}_{g,m}$, was provided by~\cite{FP} who showed that this space contained the main component coming from canonical divisors on smooth curves and extra components completely contained in the boundary of $\overline{\mathcal{M}}_{g,m}$. The global residue condition that distinguishes the main component was given by~\cite{BCGGM}, providing a full compactification of $\PPP(\kappa)$. 

Forgetting points to obtain a codimension one subvariety and taking the closure we obtain the divisor 
\begin{equation*}
D^n_\kappa=\overline{\{[C,p_1,...,p_n]\in\Mgn\hspace{0.15cm}|\hspace{0.15cm} [C,p_1,...,p_m]\in\mathcal{M}_{g,m} \text{ with  }  \sum k_ip_i\sim K_C   \}},
\end{equation*}
in $\Mgnb$ for $\kappa=(k_1,...,k_m)$ with $\sum k_i=2g-2$ where $m=n+g-2$ or $n+g-1$ for holomorphic and meromorphic signature $\kappa $ respectively. For $g\geq 2$ this divisor is irreducible unless all $k_i$ are even where there are two irreducible components based on spin structure that we denote by the indices \emph{odd} and \emph{even}. In \S\ref{divisorclass} we review the known classes $D^n_\kappa$ for meromorphic signature $\kappa$ due to~\cite{Muller}\cite{GruZak}\cite{Mullane2}.

Tensoring global sections of any two line bundles $L_1,L_2$ we obtain a global section of the tensor product $L_1\otimes L_2$. Hence the sum of two effective divisors on a projective variety $X$ is an effective divisor and there is a natural cone structure on effective divisors on $X$. Taking the closure of this cone we obtain $\Eff(X)$, the pseudo-effective cone of $X$. A fundamental problem in the birational geometry of moduli spaces is to understand the structure of this cone. The complete structure of the pseudo-effective cone for $\Mgnb$ remains an open problem.  ~\cite{CastravetTevelevHypertree} found finitely many extremal effective divisors in $\overline{\mathcal{M}}_{0,n}$ for every $n\geq 7$ indexed by irreducible hypertrees.  \cite{ChenCoskun} gave infinitely many extremal effective divisors in $\overline{\mathcal{M}}_{1,n}$ for every $n\geq3$, showing in these cases the pseudo-effective cone cannot be rational polyhedral. In general genus $g\geq 2$, ~\cite{FarkasVerraU}\cite{FarkasVerraTheta} gave a finite number of extremal divisors for any fixed $g$ and $n$ with $g-2\leq n\leq g$.

A moving curve $B$ in a projective variety $X$ is a curve class such that irreducible curves with numerical class equal to $B$ cover a Zariski dense subset of $X$. Hence $B\cdot D\geq 0$ for any pseudo-effective divisor $D$, further, if $B\cdot D=0$ then $D$ lies on the boundary of the pseudo-effective cone. A covering curve $B$ of an effective divisor $D$ is a curve class such that irreducible curves with numerical class equal to $B$ cover a Zariski dense subset of $D$. If $B\cdot D<0$ then $D$ is extremal in the pseudo-effective cone.

Taking a fibration of $\overline{\PPP}(\kappa)$ for a meromorphic signature $\kappa$ we obtain curves in $\Mgnb$,
\begin{equation*}
B^n_{\kappa}:=\overline{\bigl\{ [C,p_1,...,p_n]\in\mathcal{M}_{g,n} \hspace{0.15cm}\big| \hspace{0.15cm} \text{fixed general $[C,p_{g+2},...,p_m]\in\mathcal{M}_{g,m-g-1}$ and } \sum_{i=1}^m k_ip_i \sim K_C   \bigr\}   }.
\end{equation*}
For $m=|\kappa|\geq n+g$ these curves provide covering curves for $\Mgnb$. In \S\ref{covering} we show these curves yield the following information about the pseudo-effective cone.

\begin{restatable}{thm}{boundary}
\label{thm:boundary}
For $g\geq 2$, meromorphic signature $\kappa$ with $|\kappa|=n+g-1$ for $n\geq g+1$,
\begin{equation*}
B^n_{\kappa,1,-1}\cdot D^n_\kappa=0.
\end{equation*}
Hence the irreducible components of the divisors $D^n_\kappa$ lie on the boundary of the pseudo-effective cone. Further, if all $k_i$ are even then 
\begin{equation*}
aD^{n,\text{odd}}_{\kappa}+bD^{n,\text{even}}_{\kappa}
\end{equation*}
for any $a,b\geq 0$, lies on the boundary of the pseudo-effective cone.
\end{restatable}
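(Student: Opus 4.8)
The plan is to exploit that $B^n_{\kappa,1,-1}$ is a moving curve. Since the augmented signature has $|(\kappa,1,-1)|=n+g+1\geq n+g$, the curves $B^n_{\kappa,1,-1}$ cover a Zariski dense subset of $\Mgnb$, so their class is moving; consequently $B^n_{\kappa,1,-1}\cdot D'\geq 0$ for every pseudo-effective $D'$, and any effective divisor annihilated by $B^n_{\kappa,1,-1}$ lies on the boundary of $\Eff(\Mgnb)$. Thus the whole statement reduces to the single identity $B^n_{\kappa,1,-1}\cdot D^n_\kappa=0$, which I would prove by showing that a general representative of the class $B^n_{\kappa,1,-1}$ is disjoint from $\overline{D^n_\kappa}$.

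First I would treat the interior. Fixing the general curve $C$ and the auxiliary points, a point of the curve is $[C,p_1,\dots,p_n]$ with $p_1,\dots,p_{g+1}$ varying subject to $\sum_{i=1}^m k_ip_i+p_{m+1}-p_{m+2}\sim K_C$, where $p_{m+1},p_{m+2}$ carry the orders $1,-1$. Such a point lies on $D^n_\kappa$ precisely when $\sum_{i=1}^n k_ip_i+\sum_{j=n+1}^m k_js_j\sim K_C$ for some $s_{n+1},\dots,s_m$ on $C$; substituting the curve's relation, this is equivalent to
\begin{equation*}
\sum_{j=n+1}^m k_js_j \sim \sum_{j=n+1}^m k_jp_j + p_{m+1}-p_{m+2}=:\mathcal{L}.
\end{equation*}
The class $\mathcal{L}\in\Pic(C)$ is fixed, independent of the moving points, so membership in $D^n_\kappa$ is constant along the curve and is governed only by whether $\mathcal{L}$ lies in the image of $C^{m-n}\to\Pic(C)$, $(s_j)\mapsto\sum k_js_j$. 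Because $m-n=g-1<g$, this image is a proper subvariety of $\Pic(C)$, so for general auxiliary data $\mathcal{L}$ avoids it and the curve meets $D^n_\kappa$ at no interior point.

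It remains to show the curve also avoids the part of $\overline{D^n_\kappa}$ in $\partial\Mgnb$. Since $C$ is fixed and smooth, $B^n_{\kappa,1,-1}\cdot\lambda=0$ and $B^n_{\kappa,1,-1}\cdot\delta_{\text{irr}}=0$, and the curve reaches the boundary only where the moving points collide among themselves or with the fixed points, creating rational tails and so meeting the classes $\delta_{0,S}$. I would identify each such limit via the description of $\overline{\PPP}(\kappa,1,-1)$ by twisted canonical divisors satisfying the global residue condition of~\cite{FP}\cite{BCGGM}, and compare it with the analogous description of $\overline{D^n_\kappa}$ to conclude that no boundary point of the curve is a limit of type-$\kappa$ configurations; equivalently one checks that the weighted sum of the intersections $B^n_{\kappa,1,-1}\cdot\psi_i$ and $B^n_{\kappa,1,-1}\cdot\delta_{0,S}$ against the class of $D^n_\kappa$ recalled in \S\ref{divisorclass} vanishes. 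I expect this boundary analysis to be the main obstacle, as it is exactly where the global residue condition must be used to separate the two strata.

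Finally the spin refinement is formal. When all $k_i$ are even, $D^n_\kappa=D^{n,\text{odd}}_\kappa+D^{n,\text{even}}_\kappa$ with both components effective. As $B^n_{\kappa,1,-1}$ is moving, $B^n_{\kappa,1,-1}\cdot D^{n,\text{odd}}_\kappa\geq 0$ and $B^n_{\kappa,1,-1}\cdot D^{n,\text{even}}_\kappa\geq 0$, while their sum is $B^n_{\kappa,1,-1}\cdot D^n_\kappa=0$; hence each vanishes. Therefore $B^n_{\kappa,1,-1}\cdot(aD^{n,\text{odd}}_\kappa+bD^{n,\text{even}}_\kappa)=0$ for all $a,b\geq 0$, and being effective this divisor lies on the supporting hyperplane of $\Eff(\Mgnb)$ determined by the moving class $B^n_{\kappa,1,-1}$, hence on the boundary.
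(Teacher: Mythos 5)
Your strategy coincides with the paper's: since $|(\kappa,1,-1)|=n+g+1\geq n+g$, the class $B^n_{\kappa,1,-1}$ is moving, so the whole theorem reduces to the single vanishing $B^n_{\kappa,1,-1}\cdot D^n_\kappa=0$, proved by showing the curve misses the divisor for general auxiliary data; and your reduction to the relation $\sum_{j=n+1}^m k_js_j\sim\mathcal{L}$ is literally the paper's displayed equation. The one genuine difference is the final dimension count. The paper converts the relation into a degree-$d$ cover of $\PP^1$ with prescribed ramification over $0$ and $\infty$, and compares the dimension $4g-3$ of the relevant Hurwitz space with $\dim\mathcal{M}_{g,g+1}=4g-2$. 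You instead note that the image of $C^{g-1}\to\Pic(C)$, $(s_j)\mapsto\sum k_js_j$, is a proper subvariety and that $\mathcal{L}$ is general. Both counts are correct, but yours needs one additional sentence: you must justify that $\mathcal{L}$, which depends on the $g+1$ general fixed points through a weighted Abel--Jacobi map $C^{g+1}\to\Pic^e(C)$, actually sweeps out all of $\Pic^e(C)$ (standard, since a sum of $g$ translates of a curve generating the Jacobian is the whole Jacobian); without surjectivity, ``general $\mathcal{L}$ avoids the image'' is not yet established. Regarding the boundary analysis you defer and flag as the main obstacle: the concern is legitimate in principle, since disjointness over $\mathcal{M}_{g,n}$ does not by itself rule out the closed curve meeting $\overline{D^n_\kappa}$ at its finitely many boundary points, but the paper's own proof stops exactly where yours does and performs no boundary check; carrying it out with twisted canonical divisors and the global residue condition, as you propose, would supplement rather than merely reproduce the published argument. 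Your spin-structure deduction (each summand is nonnegative by the moving property and they sum to zero, hence each vanishes) is correct and is the intended, though unwritten, justification of the final clause.
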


For any divisor $D^n_\kappa$, curves with numerical class equal to $B^n_\kappa$ cover a Zariski dense subset of the divisor. For certain signatures we are able to show the curve $B^n_\kappa$ is a component of a specialisation of $B^n_{\kappa,1,-1}$ with the other component being completely contained in the boundary of $\Mgnb$. The positive intersection of these boundary curves with $D^n_\kappa$ gives 
\begin{equation*}
B^n_\kappa\cdot D^n_\kappa<0,
\end{equation*}
showing these divisors to be extremal and rigid.

\begin{restatable}{thm}{extremal}
\label{thm:extremal}
The divisors $D^{g+1}_{\underline{d},1^{g-1}}$ in $\overline{\mathcal{M}}_{g,g+1}$ for $g\geq 2$ are rigid and extremal for $\underline{d}=(d_1,d_2,d_3,1^{g-2})$ with $d_1+d_2+d_3=1$ and $\sum_{d_i<0}d_i\leq-2$. Hence $\Eff(\Mgnb)$ is not rational polyhedral for $g\geq 2$, $n\geq g+1$.
\end{restatable}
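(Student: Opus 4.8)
The plan is to reduce everything to a single intersection inequality and then invoke a standard positivity principle. Recall that if an irreducible effective divisor $D$ on a projective variety $X$ admits a covering family of irreducible curves $B$ with $B\cdot D<0$, then $D$ is rigid and spans an extremal ray of $\Eff(X)$: any effective decomposition of $D$ forces $D$ itself into the support of one summand, and any effective divisor in $|mD|$ must contain $mD$. Taking $\kappa=(\underline d,1^{g-1})$, the curve $B^{g+1}_\kappa$ lies in $D^{g+1}_\kappa$, and as the fixed data $(C,p_{g+2},\dots,p_{2g})$ ranges over a dense subset of $\mathcal{M}_{g,g-1}$ these one-parameter families sweep out all of $D^{g+1}_\kappa$ (a dimension count: $\dim\mathcal{M}_{g,g-1}+1=4g-3=\dim D^{g+1}_\kappa$). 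Since $\kappa$ contains the odd entries $1^{g-1}$ with $g\ge 2$, the divisor $D^{g+1}_\kappa$ is irreducible, so it suffices to prove $B^{g+1}_\kappa\cdot D^{g+1}_\kappa<0$.

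To produce this number I would not intersect $B^{g+1}_\kappa$ with $D^{g+1}_\kappa$ directly---the curve lies inside the divisor---but instead feed off the moving-curve identity $B^{g+1}_{\kappa,1,-1}\cdot D^{g+1}_\kappa=0$ of Theorem~\ref{thm:boundary}. The key geometric input is a degeneration of the defining data of the moving curve: letting the two auxiliary marked points of weight $+1$ and $-1$ come together sprouts, in the twisted-differential compactification of~\cite{BCGGM}, a rational tail carrying the colliding simple zero and simple pole, with the global residue condition dictating the orders along the resulting node. Tracking the flat limit of the one-parameter family $B^{g+1}_{\kappa,1,-1}$ through this degeneration, the limit breaks into two pieces,
\begin{equation*}
B^{g+1}_{\kappa,1,-1}\equiv B^{g+1}_\kappa+\Gamma,
\end{equation*}
where the main piece recovers signature $\kappa$ on the original curve (hence is $B^{g+1}_\kappa$) and $\Gamma$ is a residual curve supported entirely in $\partial\overline{\mathcal{M}}_{g,g+1}$. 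It is precisely here that the hypothesis $\sum_{d_i<0}d_i\le -2$ enters: enough pole order must be available for the global residue condition to admit this particular limit and to ensure that the boundary piece $\Gamma$ is genuinely nontrivial.

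I would then compute $\Gamma\cdot D^{g+1}_\kappa$ from the explicit class of $D^n_\kappa$ recorded in \S\ref{divisorclass} together with the standard intersection numbers of a boundary curve against $\lambda$, the $\psi_i$ and the boundary classes, and check that it is strictly positive. Combining this with Theorem~\ref{thm:boundary} and the decomposition above gives
\begin{equation*}
B^{g+1}_\kappa\cdot D^{g+1}_\kappa=-\,\Gamma\cdot D^{g+1}_\kappa<0,
\end{equation*}
which by the covering-curve principle makes every $D^{g+1}_{\underline d,1^{g-1}}$ rigid and extremal. I expect the main obstacle to be exactly the book-keeping of this limit: one must verify that the flat limit of $B^{g+1}_{\kappa,1,-1}$ consists of $B^{g+1}_\kappa$ together with a single identifiable boundary curve and no spurious extra components, and then confirm the sign $\Gamma\cdot D^{g+1}_\kappa>0$.

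Finally, to conclude that $\Eff(\overline{\mathcal{M}}_{g,n})$ is not rational polyhedral I would note that the classes $[D^{g+1}_{\underline d,1^{g-1}}]$ are pairwise non-proportional: their $\psi$-coefficients in the formula of \S\ref{divisorclass} take infinitely many distinct values as $\underline d=(d_1,d_2,d_3,1^{g-2})$ runs over the integer solutions of $d_1+d_2+d_3=1$ with $\sum_{d_i<0}d_i\le -2$, so they span infinitely many distinct extremal rays of $\Eff(\overline{\mathcal{M}}_{g,g+1})$. For $n>g+1$ I would transport these rays along the forgetful morphism $\pi\colon\overline{\mathcal{M}}_{g,n}\to\overline{\mathcal{M}}_{g,g+1}$: since $\pi_*\mathcal{O}=\mathcal{O}$ the pullback of a rigid divisor is rigid, while lifting the covering curves of $D^{g+1}_\kappa$ to curves over them produces a covering family of $\pi^{-1}(D^{g+1}_\kappa)$ whose intersection with $\pi^*D^{g+1}_\kappa$ equals $B^{g+1}_\kappa\cdot D^{g+1}_\kappa<0$ by the projection formula. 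Hence each pullback is again rigid and extremal, and as $\pi^*$ is injective on numerical classes this yields infinitely many extremal rays in $\overline{\mathcal{M}}_{g,n}$ for every $n\ge g+1$.
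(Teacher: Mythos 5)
Your proposal follows essentially the same route as the paper: it uses the vanishing $B^{g+1}_{\kappa,1,-1}\cdot D^{g+1}_{\kappa}=0$ from Theorem~\ref{thm:boundary}, specialises the moving curve by bubbling the auxiliary $\pm1$ points onto a rational tail so that it decomposes as $B^{g+1}_{\kappa}$ plus a boundary piece with positive intersection, and then applies the covering-curve criterion and the projection formula for $n>g+1$, exactly as in Propositions~\ref{prop:covering} and~\ref{prop:intersection}. The only cosmetic difference is that the paper establishes $\delta(B^{g+1}_{\kappa,1,-1})\cdot D^{g+1}_{\kappa}>0$ geometrically (the boundary curve meets the divisor in the nonempty finite set of configurations where the residue at the node vanishes, which is where $\sum_{d_i<0}d_i\leq-2$ is used) rather than by evaluating the explicit divisor class.
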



A $\QQ$-factorial projective variety $X$ is a Mori dream space if it is of the simplest type from the perspective of the minimal model program. Introduced by~\cite{HuKeel}, if the Cox ring of $X$ is finitely generated and the Neron-Severi group of $X$ is $\Pic(X)\otimes\QQ$ then the nef and semi-ample cones of divisors on $X$ correspond and the effective cone of divisors on $X$ is rational polyhedral and has a specific chamber decomposition. By providing a nef divisor that is not semi-ample, \cite{Keel} showed $\Mgnb$ is not a Mori dream space for $g\geq 3$ and $n\geq 1$.
\cite{ChenCoskun} exhibited infinitely many extremal divisors in the effective cone of $\overline{\mathcal{M}}_{1,n}$ for $n\geq3$ showing that these spaces are not Mori dream spaces.
$\overline{\mathcal{M}}_{0,n}$ was long thought to be a Mori dream space due to its similarity to a toric variety. This was known for $n\leq 6$, however,~\cite{CastravetTevelev} showed $\overline{\mathcal{M}}_{0,n}$ is not a Mori dream space for $n\geq 134$. This result was improved to $n\geq13$ by~\cite{GK} and then to $n\geq 10$ by~\cite{HKL} who further showed that the intermediate open cases would require a new method. Theorem~\ref{thm:extremal} sheds light on the open cases of genus $g=2$, providing the following result.

\begin{restatable}{cor}{moridreamspace}
\label{cor:moridreamspace}
$\overline{\mathcal{M}}_{2,n}$ is not a Mori dream space for $n\geq 3$.
\end{restatable}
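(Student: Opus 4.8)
The plan is to deduce the statement directly from Theorem~\ref{thm:extremal} together with the structural characterisation of Mori dream spaces due to~\cite{HuKeel}. The essential input is the dichotomy recalled in the introduction: for a $\QQ$-factorial projective variety $X$ with Neron--Severi group $\Pic(X)\otimes\QQ$, finite generation of the Cox ring forces the pseudo-effective cone $\Eff(X)$ to be rational polyhedral. Thus it suffices to show that $\Eff(\Mgnb)$ fails to be rational polyhedral for each $n\geq 3$, and then to invoke the contrapositive of this implication.

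First I would record that $\Mgnb$ is a normal projective $\QQ$-factorial variety whose Neron--Severi group is $\Pic\otimes\QQ$, so that the framework of~\cite{HuKeel} applies and the assertion ``Mori dream space'' is meaningful in this setting. Next I would specialise Theorem~\ref{thm:extremal} to $g=2$: the index $1^{g-1}$ reduces to a single marked point and $\underline{d}=(d_1,d_2,d_3)$ ranges over all triples with $d_1+d_2+d_3=1$ and $\sum_{d_i<0}d_i\leq -2$. This produces infinitely many rigid extremal divisors $D^3_{\underline{d},1}$ in $\overline{\mathcal{M}}_{2,3}$, spanning infinitely many distinct extremal rays, so that $\Eff(\overline{\mathcal{M}}_{2,3})$ cannot be rational polyhedral. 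For $n>3$ the corresponding non-polyhedrality is already part of the conclusion of Theorem~\ref{thm:extremal}, giving that $\Eff(\overline{\mathcal{M}}_{2,n})$ is not rational polyhedral for every $n\geq 3$.

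Finally I would combine these observations: since any Mori dream space has rational polyhedral pseudo-effective cone, while $\Eff(\overline{\mathcal{M}}_{2,n})$ is not rational polyhedral for $n\geq 3$, the space $\overline{\mathcal{M}}_{2,n}$ cannot be a Mori dream space in this range. This completes the argument, and in particular addresses the remaining genus $2$ cases alongside the known results for $\overline{\mathcal{M}}_{0,n}$ and $\overline{\mathcal{M}}_{1,n}$.

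There is no serious computational obstacle here, since all of the geometric content is carried by Theorem~\ref{thm:extremal}; the proof is a formal consequence of it. The only point requiring care is the bookkeeping of hypotheses: one must confirm that $\overline{\mathcal{M}}_{2,n}$ genuinely satisfies the $\QQ$-factoriality and Neron--Severi conditions under which the Hu--Keel implication ``Mori dream space $\Rightarrow$ rational polyhedral effective cone'' holds, so that the failure of $\Eff(\overline{\mathcal{M}}_{2,n})$ to be rational polyhedral is a legitimate obstruction to the Mori dream space property.
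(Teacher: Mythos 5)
Your argument is correct and is essentially identical to the paper's: the paper likewise deduces the corollary by specialising Theorem~\ref{thm:extremal} to $g=2$ (so $n\geq g+1$ becomes $n\geq 3$) and invoking the Hu--Keel fact that a Mori dream space has rational polyhedral pseudo-effective cone. Your additional remarks on verifying $\QQ$-factoriality and the Neron--Severi hypothesis are sensible bookkeeping but do not change the substance of the argument.
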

$\text{}$
\\
\textbf{Acknowledgements.} I am grateful to Dawei Chen, Joe Harris and Anand Patel for many stimulating discussions on topics related to this paper.

\section{Preliminaries}
\subsection{Strata of abelian differentials}\label{strata:abelian}
The \emph{stratum of abelian differentials with signature $\kappa=(k_1,...,k_n)$,} a non-zero integer partition of $2g-2$ is defined as
\begin{equation*}
\HH(\kappa):=\{ (C,\omega) \hspace{0.15cm}|\hspace{0.15cm} g(C)=g,\hspace{0.05cm} (\omega)=k_1p_1+...+k_np_n, \text{ for $p_i$ distinct}\}
\end{equation*}
where $\omega$ is a meromorphic differential on $C$. Hence $\HH(\kappa)$ is the space of abelian differentials with prescribed multiplicities of zeros and poles given by $\kappa$. By relative period coordinates $\HH(\kappa)$ has dimension $2g+n-1$ if $\kappa$ is holomorphic (all $k_i>0$) and $2g+n-2$ if $\kappa$ is meromorphic (some $k_i<0$). 

The \emph{stratum of canonical divisors with signature $\kappa$} is defined as
\begin{equation*}
\PPP(\kappa):=\{[C,p_1,...,p_n]\in \Mgn   \hspace{0.15cm}| \hspace{0.15cm}k_1p_1+...+k_np_n\sim K_C\}.
\end{equation*}
By forgetting the ordering of the zeros or poles of the same multiplicity we obtain the projectivisation of $\HH(\kappa)$ under the $\CC^*$ action that scales the differential $\omega$. Hence $\PPP(\kappa)$ has dimension $2g+n-2$ or $2g+n-3$ for holomorphic or meromorphic $\kappa$ respectively. 

A \emph{theta characteristic} $\eta$ is a line bundle on a smooth curve $C$ such that $\eta^{\otimes 2}\sim K_C$. The parity of $h^0(C,\eta)$ is known as the \emph{spin structure} of $\eta$ and is deformation invariant~\cite{Mumford}\cite{Atiyah}. If all $k_i$ are even then any $(C,\omega)$  with $(\omega)=\sum_{i=1}^nk_ip_i$ specifies a theta characteristic on the underlying curve
\begin{equation*}
\eta\sim \sum_{i=1}^n \frac{k_i}{2}p_i.
\end{equation*}
In these cases the loci $\HH(\kappa)$ and $\PPP(\kappa)$ are reducible with disjoint components with even and odd parity of $h^0(C,\eta)$ known as spin structure. 

For specific signatures an extra component appears as differentials on a hyperelliptic curves resulting from pulling back a degree $g-1$ rational function under the unique hyperelliptic cover of $\PP^1$.~\cite{KontsevichZorich} showed that this resulted in a distinct hyperelliptic component for signatures $\kappa=(2g-2)$ and $(g-1,g-1)$ for $g\geq 4$. Hence $\HH(\kappa)$ and hence $\PPP(\kappa)$ has at most $3$ connected components  for holomorphic $\kappa$.~\cite{Boissy} showed that for $g\geq 2$ in the meromorphic case, in addition to signatures with all $k_i$ even, signatures with all even positive entries and poles of the form $\{-1,-1\}$ also had two spin structure components based on the parity of the line bundle
\begin{equation*}
\eta\sim \sum_{k_i>0} \frac{k_i}{2}p_i.
\end{equation*}
Further, $\HH(\kappa)$ and hence $\PPP(\kappa)$ contain a hyperelliptic component distinct from any possible spin structure component if the zeros of $\kappa$ are of the form $\{2n\}$ or $\{n,n\}$ for any positive integer $n$ and the poles are of the form $\{-2p\}$ or $\{-p,-p\}$ for any positive integer $p\geq 2$.

\subsection{Degeneration of abelian differentials}\label{degen}
The moduli space of twisted canonical divisors of type $\kappa=(k_1,...,k_n)$ was defined by~\cite{FP}. Consider a stable pointed curve $[C,p_1,...,p_n]\in\Mgnb$. 
A twisted canonical divisor of type $\kappa$ is a collection of (possibly meromorphic) divisors $D_j$ on each irreducible component $C_j$ of $C$ such that
\begin{enumerate}
\item
The support of $D_j$ is contained in the set of marked points and the nodes lying in $C_j$, moreover if $p_i\in C_j$ then $\ord_{p_i}(D_j)=k_i$.
\item
If $q$ is a node of $C$ and $q\in C_i\cap C_j$ then $\ord_q(D_i)+\ord_q(D_j)=-2$.
\item
If $q$ is a node of $C$ and $q\in C_i\cap C_j$ such that $\ord_q(D_i)=\ord_q(D_j)=-1$ then for any $q'\in C_i\cap C_j$ we have $\ord_{q'}(D_i)=\ord_{q'}(D_j)=-1$. We write $C_i\sim C_j$.
\item
If $q$ is a node of $C$ and $q\in C_i\cap C_j$ such that $\ord_q(D_i)>\ord_q(D_j)$ then for any $q'\in C_i\cap C_j$ we have $\ord_{q'}(D_i)>\ord_{q'}(D_j)$. We write $C_i\succ C_j$.
\item
There does not exist a directed loop $C_1\succeq C_2\succeq...\succeq C_k\succeq C_1$ unless all $\succeq$ are $\sim$.
\end{enumerate}  
In addition to the main component containing canonical divisors of type $\kappa$ on smooth pointed curves, ~\cite{FP} showed this space included extra components completely contained in the boundary.
The global residue condition, recently provided by~\cite{BCGGM}, distinguishes the main component from the boundary components giving a full compactification for the strata of abelian differentials. Let $\Gamma$ be the dual graph of $C$. A twisted canonical divisor of type $\kappa$ is the limit of twisted canonical divisors on smooth curves if there exists a collection of meromorphic differentials $\omega_i$ on $C_i$ with $(\omega_i)=D_i$ that satisfy the following conditions
\begin{enumerate}
\item
If $q$ is a node of $C$ and $q\in C_i\cap C_j$ such that $\ord_q(D_i)=\ord_q(D_j)=-1$ then $\res_q(\omega_i)+\res_q(\omega_j)=0$.
\item
There exists a full order on the dual graph $\Gamma$, written as a level graph $\overline{\Gamma}$, agreeing with the order of $\sim$ and $\succ$, such that for any level $L$ and any connected component $Y$ of  $\overline{\Gamma}_{>L}$ that does not contain a prescribed pole we have
\begin{equation*}
\sum_{\begin{array}{cc}\text{level}(q)=L, \\q\in C_i\in Y\end{array}}\res_{q}(\omega_i)=0
\end{equation*} 
\end{enumerate}
Part (b) is known as the \emph{global residue condition}. We provide two explicit examples of the global residue condition that will be relevant to later sections.

\begin{ex}\label{Ex1}
Consider a genus $g=2$ curve $C$ with a $\PP^1$-tail $X$, attached at a node $x$, a general point in $C$. Let $j\geq 2$. Figure 1 depicts a twisted canonical divisor of type $\kappa=(j,-j,1,1)$ on this nodal curve with
\begin{eqnarray*}
D_C&=&x+p_4\sim K_C,\\
D_{X}&=&jp_1-jp_2+p_3-3x\sim K_{X}.
\end{eqnarray*}
\begin{figure}[htbp]
\begin{center}
\begin{overpic}[width=0.6\textwidth]{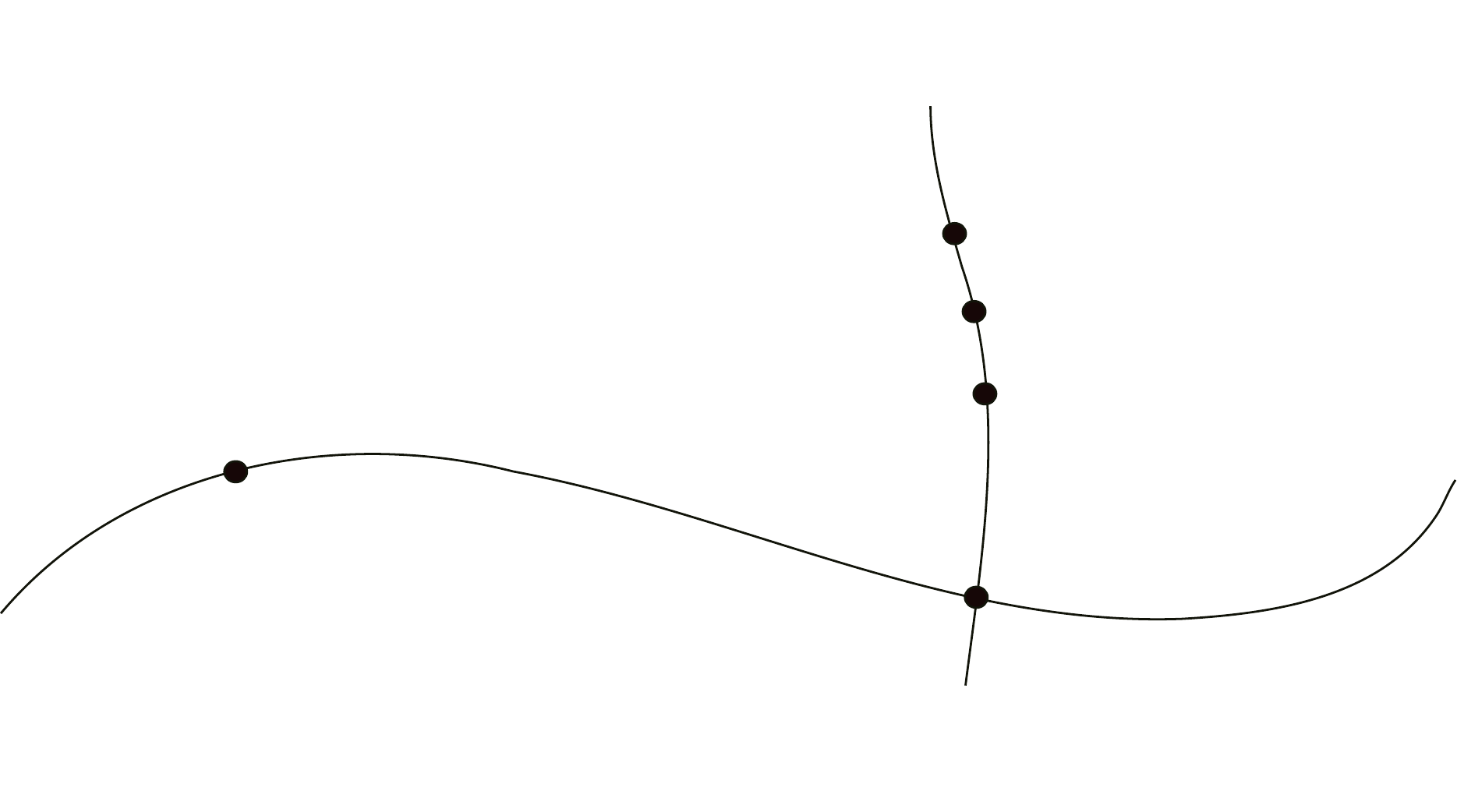}

\put (10,24){$$}
\put (36,24){$$}
\put (12,20){$p_4$}
\put (63,12){$x$}
\put (62,17){$$}
\put (61,28){$p_3$}
\put (60.5,33.5){$p_2$}
\put (60,39){$p_1$}

\put (-5,16){$C$}

\put (65,50){$X\cong \PP^1$}

\put (-5,-0){\footnotesize{Figure 1: A twisted canonical divisor of type $\kappa=(j,-j,1,1)$ on a nodal curve. }}
\end{overpic}
\end{center}
\end{figure}
To find the conditions on such a twisted canonical divisor being smoothable we consider the conditions placed on the residues by the possible level graphs. In this case there are only two components and there is just one possible level graph pictured in Figure 2. The global residue condition then gives that the condition on the twisted differential being the limit of differentials on smooth curves is that there exists a differential $\omega_{X}$ with $(\omega_{X})\sim D_{X}$ and $\res_x(\omega_X)=0$.
\begin{figure}[htbp]
\begin{center}
\begin{overpic}[width=0.25\textwidth]{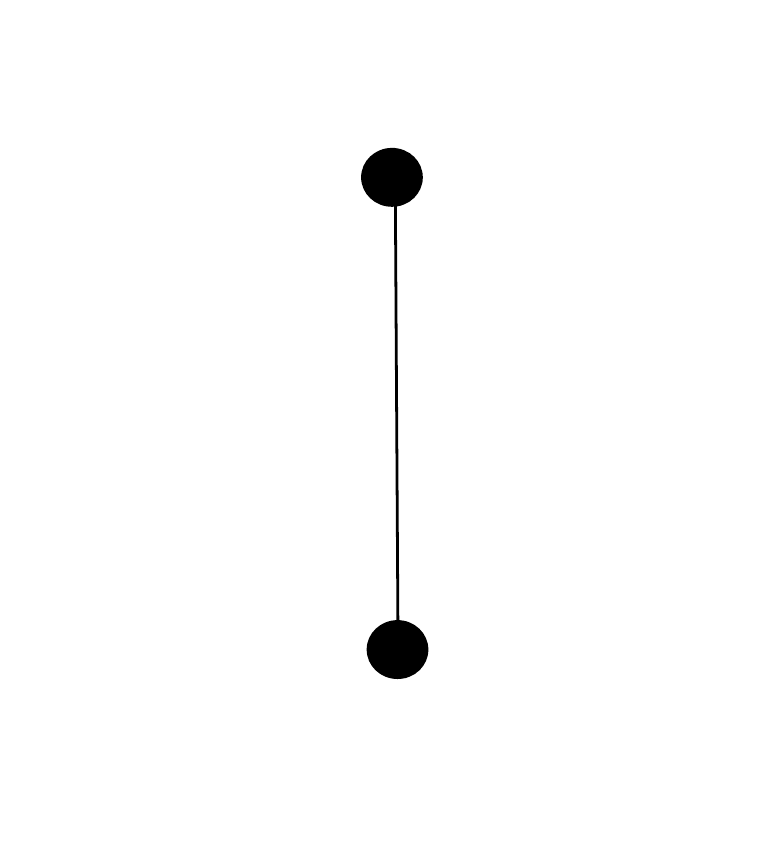}
\put (32, 23){$X$}
\put (32, 78){$C$}
\put (-50,-0){\footnotesize{Figure 2: The level graph giving the global residue condition.}}
\end{overpic}
\end{center}
\end{figure}

Consider the differential $\omega_{X}$. By the cross ratio we can set the poles to $0$ and $\infty$ and the zeros to $1$ and $a$. The resulting differential is given locally at $0$ by
\begin{equation*}
c\frac{(z-1)^{j}(z-a)}{z^{3}}dz
\end{equation*}  
for some constant $c\in \CC^*$. The residue at $0$ is
\begin{equation*}
c(-1)^{j-1}\left(j+\begin{pmatrix} j\\ 2 \end{pmatrix}a\right),
\end{equation*}
which is zero for the unique value $a=-2/(j-1)$. 
Hence though any value of $a$ gives a twisted canonical divisor, it is only for this unique value of $a$ that the twisted canonical divisor is the limit of canonical divisors of this type on smooth curves.
\begin{figure}[htbp]
\begin{center}
\begin{overpic}[width=0.5\textwidth]{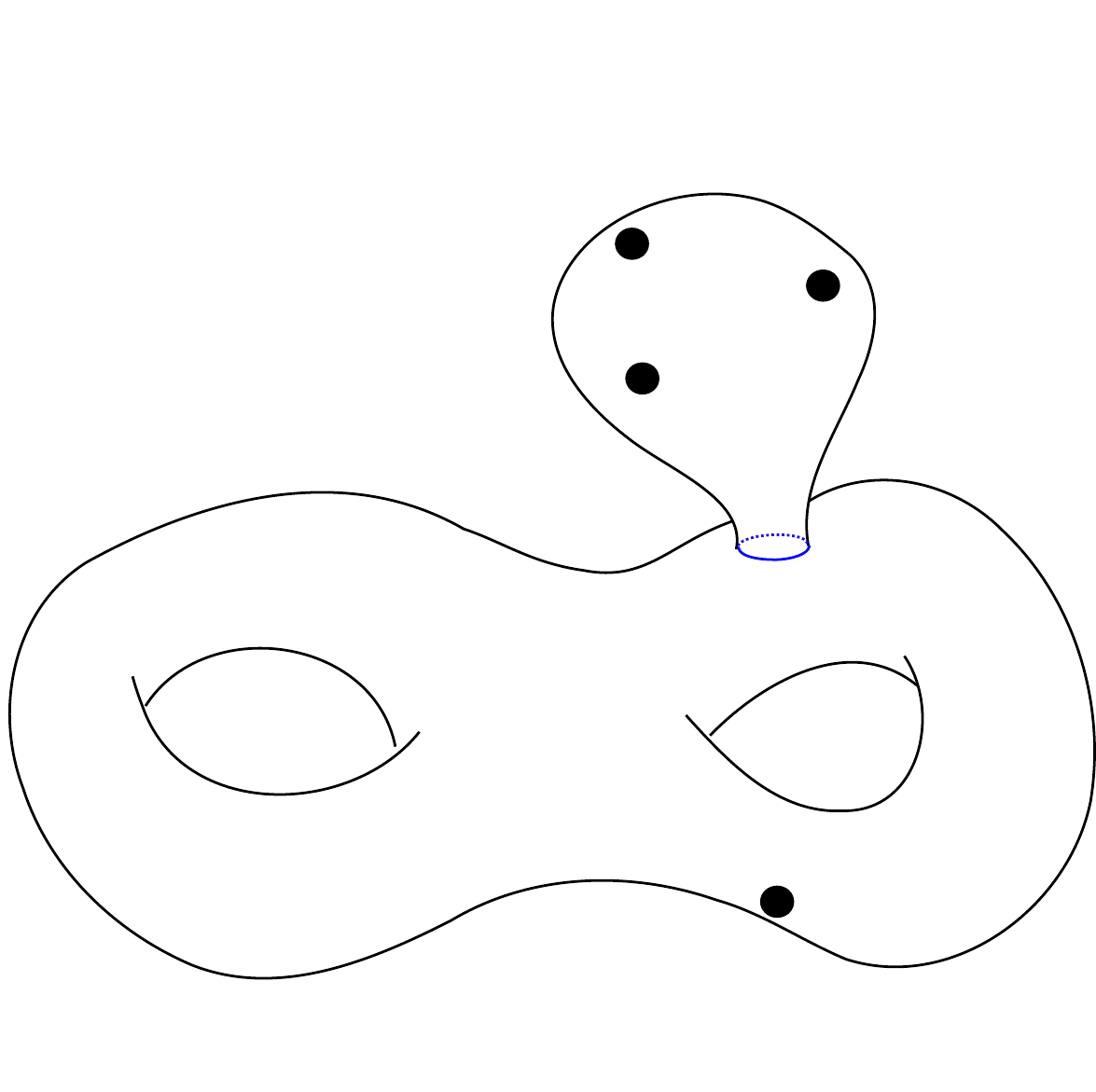}
\put (83, 73){$X_t$}
\put (0,16){$C_t$}
\put (70,45){$v$}
\put (61,76){$p_1$}
\put (62,64){$p_2$}
\put (68,73){$p_3$}
\put (74,16){$p_4$}

\put (-15,-0){\footnotesize{Figure 3: A twisted canonical divisor of type $\kappa=(j,-j,1,1)$ on a smooth curve. }}
\end{overpic}
\end{center}
\end{figure}

The necessity of the condition on the residue in this case can be observed by considering topologically a family of twisted canonical divisors on smooth curves degenerating to a nodal marked curve. Let $\chi$ be a family of meromorphic differentials $(Z_t,\omega_t)$ with $\omega_t$ of type $\kappa=(j,-j,1,1)$ on smooth curves $Z_t$ for $t\ne 0$, degenerating to the nodal curve $Z_0=C\cup_x X$ at $t=0$. Figure 3 shows $(Z_t,\omega_t)$ for $t\ne 0$. Let $v$ be the vanishing cycle on $Z_t$ that shrinks to the node $x$ at $t=0$. Observe $v$ separates the curve into components $C_t$ and $X_t$ with $C_t\to C$ and $X_t\to X$ as $t\to 0$. Now as $C_t$ contains no poles, by an application of Stokes formula to the cycle $v$ we have
\begin{equation*}
\int_v \omega_t=0.
\end{equation*}
The global residue condition is the limit of this condition as $t\to 0$ and hence we have shown that the condition is necessary. Complex-analytic plumbing techniques and flat geometry are used in~\cite{BCGGM} to show the global residue condition obtained in this way is sufficient in all cases.

\end{ex}

\begin{ex}\label{Ex2}
The following example provides further practice working with the global residue condition and will also be relevant to our later calculations. 
\begin{figure}[htbp]
\begin{center}
\begin{overpic}[width=0.5\textwidth]{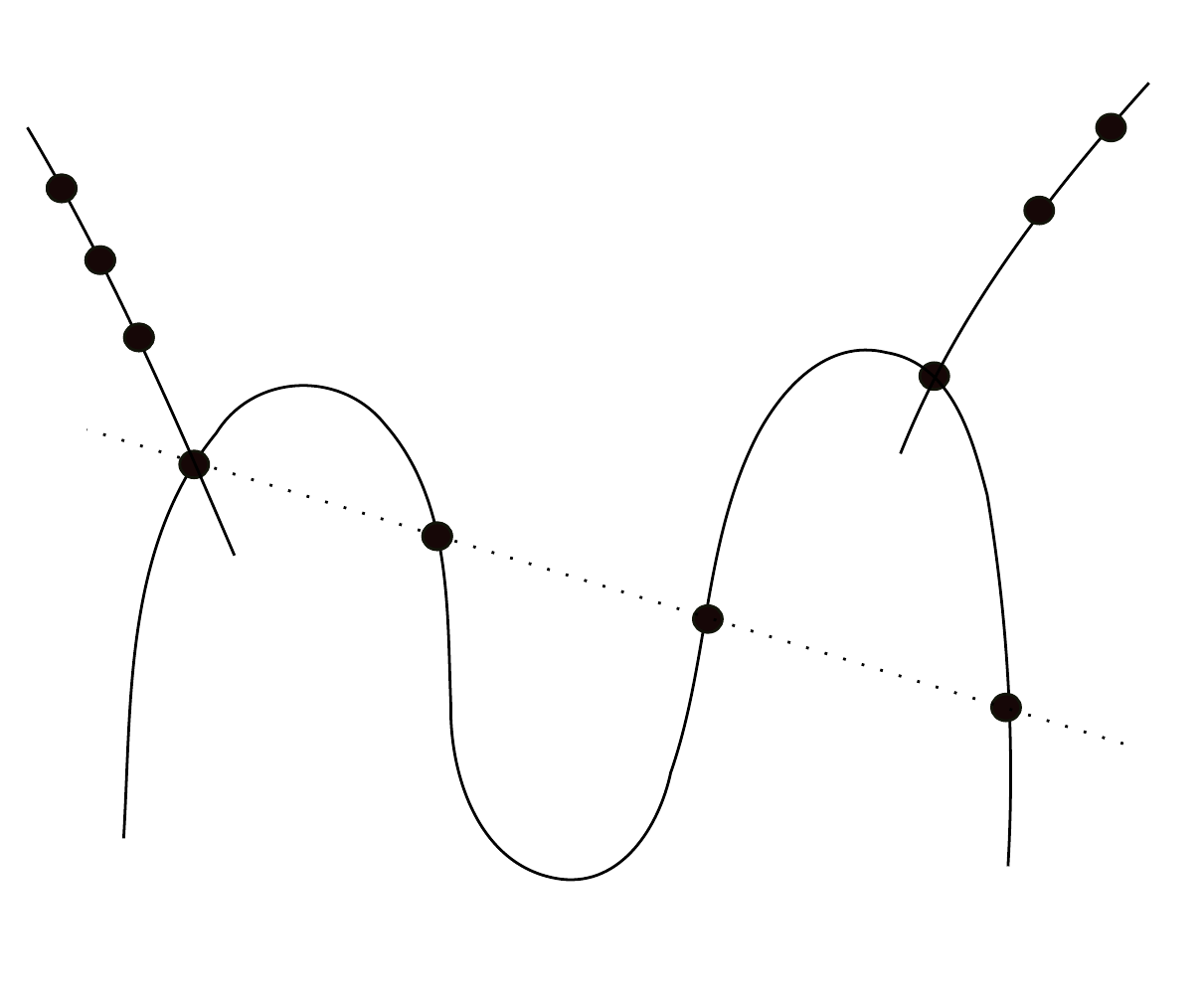}
\put (4,16){$C$}
\put (20,62){$Y\cong \PP^1$}
\put (8,68){$p_1$}
\put (11,62){$p_2$}
\put (14,55){$p_3$}
\put (10,43){$y$}
\put (31,36){$p_4$}
\put (53,28){$p_5$}
\put (78,22){$p_6$}
\put (85,56){$X\cong \PP^1$}
\put (80,66){$p_7$}
\put (86,73){$p_8$}
\put (76,55){$x$}
\put (26,0){$y+p_4+p_5+p_6\sim K_C$}

\put (-30,-10){\footnotesize{Figure 4: A twisted canonical divisor of type $\kappa=(d_1,d_2,d_3,1^4,-1)$ on a nodal genus $g=3$ curve.}}
\end{overpic}
\vspace{0.8cm}
\end{center}
\end{figure}
Consider a genus $g=3$ curve $C$ with $\PP^1$-tails we denote $X$ and $Y$ attached at a nodes $x$ and $y$ respectively. Figure 4 depicts a twisted canonical divisor of type $\kappa=(d_1,d_2,d_3,1^4,-1)$ with $\sum d_i=1$ for $d_i\ne 0$, $\{d_i\}\ne\{1,1,-1\}$  on this nodal curve with
\begin{eqnarray*}
D_C&=&y+p_4+p_5+p_6 \sim K_C\\
D_X&=&p_{7}-p_{8}-2x\sim K_X\\
D_Y&=&d_1p_1+d_2p_2+d_3p_3-3y\sim K_Y.
\end{eqnarray*}
To find the conditions on such a twisted canonical divisor being smoothable we consider the conditions placed on the residues by the possible level graphs.  
\begin{figure}[htbp]
\begin{center}
\vspace{1cm}
\begin{overpic}[width=0.7\textwidth]{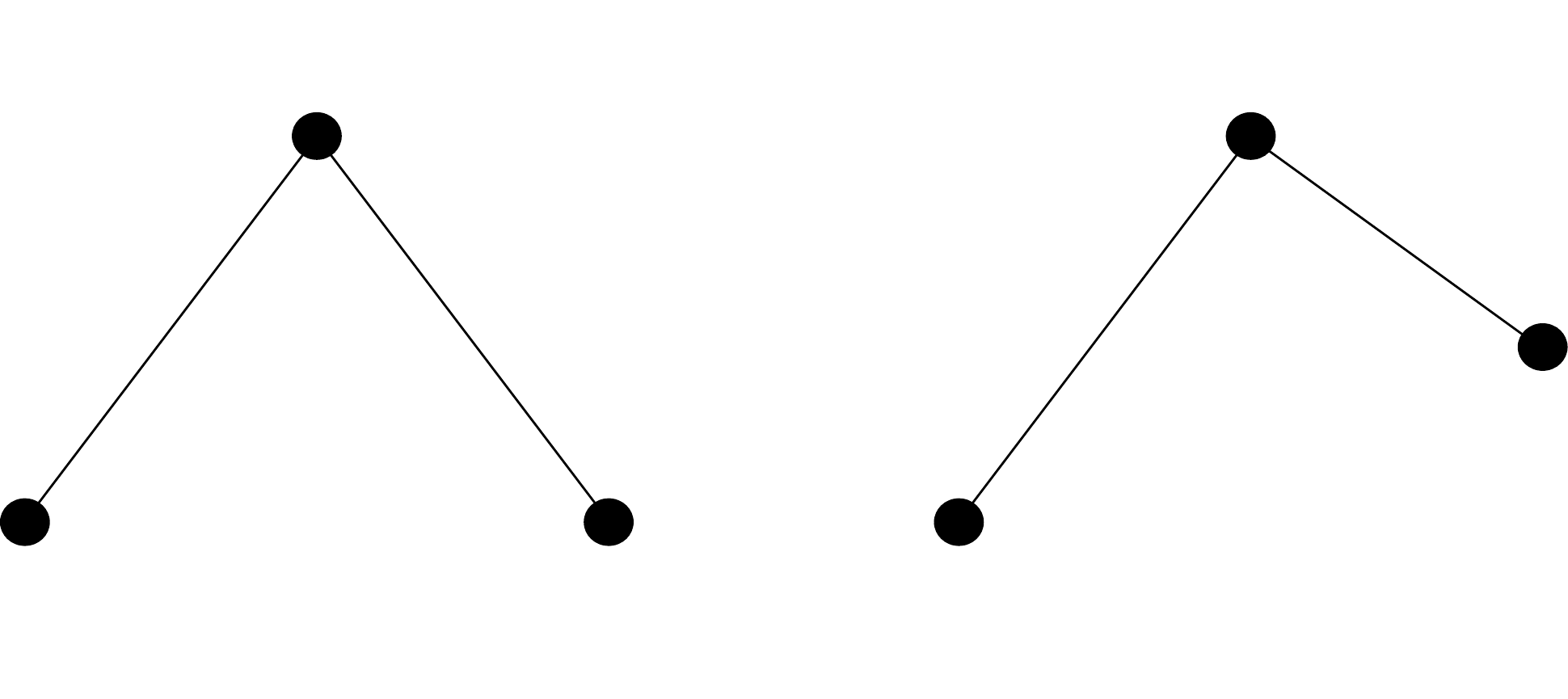}

\put (19,40){$C$}
\put (0,6){$X$}
\put (38,6){$Y$}
\put (13,47){Graph A}

\put (78.5,40){$C$}
\put (59,6){$X$}
\put (97,17){$Y$}
\put (72,47){Graph B}

\put (11,-0){\footnotesize{Figure 5: Level graphs giving the global residue condition.}}
\end{overpic}
\end{center}
\end{figure}
Observe that due to the simple pole at $p_{8}$, for any differential $\omega_X$ on $X$ with $(\omega_X)\sim p_{7}-p_{8}-2x$ we have $\res_x(\omega_X)=-\res_{p_{8}}(\omega_X)\ne 0$. 
\begin{figure}[htbp]
\begin{center}
\begin{overpic}[width=0.7\textwidth]{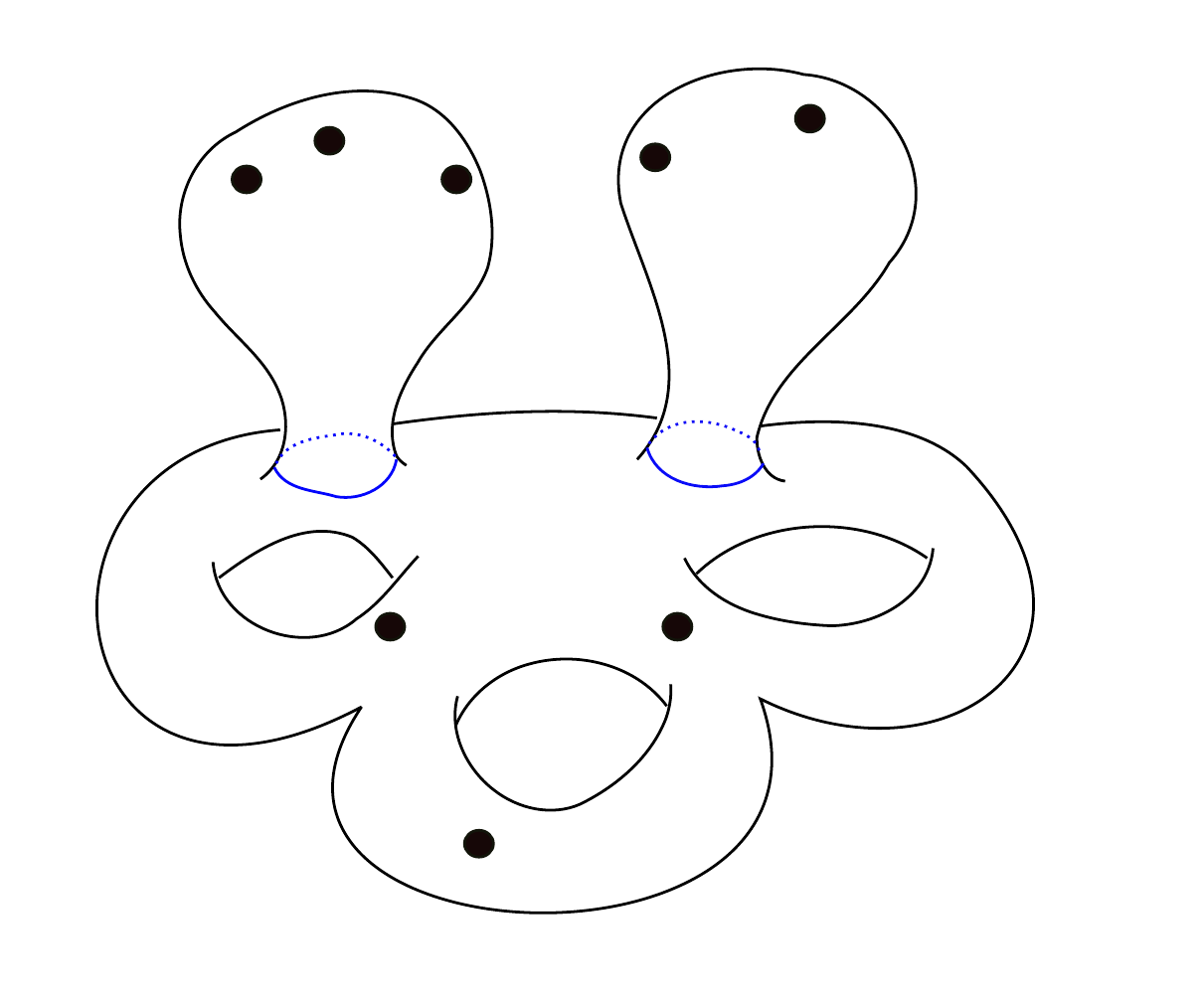}
\put (75, 57){$X_t$}
\put (10, 57){$Y_t$}
\put (10,16){$C_t$}
\put (19,64){$p_1$}
\put (26,67){$p_2$}
\put (36,64){$p_3$}

\put (57,40){$v_1$}
\put (27,40){$v_2$}

\put (54,66){$p_7$}
\put (66,69){$p_8$}

\put (35,30){$p_4$}
\put (34,13){$p_5$}
\put (51,30){$p_6$}

\put (-5,-0){\footnotesize{Figure 6: Canonical divisor of type $\kappa=(d_1,d_2,d_3,1^4,-1)$ on a smooth curve. }}
\end{overpic}
\end{center}
\end{figure}
Hence Figure $5$ gives the two possible level graphs to provide smoothable twisted canonical divisors of this type. Graph A gives the condition
\begin{equation*}
\res_x(\omega_X)+\res_y(\omega_Y)=0,
\end{equation*}
while Graph B gives the condition
\begin{equation*}
\res_y(\omega_Y)=0.
\end{equation*}
Hence all configurations of $y,p_1,p_2,p_3$ on the rational tail $Y$ are smoothable.

The necessity of the condition on the residue in this case can be again observed by considering topologically a family of twisted canonical divisors on smooth curves degenerating to a nodal marked curve. This situation is depicted in Figure 6. The different possible global residue conditions distinguish the relative speed at which $v_1$ and $v_2$ degenerate to nodes. Graph A in Figure 5 represents the situation where $v_1$ and $v_2$ degenerate to nodes at the same speed, while Graph B in Figure 5 represents the situation that $v_1$ degenerates faster than $v_2$, and hence due to the poles on $Y_t$, this results in no residue condition at $x$. 
\end{ex}

\subsection{Divisor theory on $\Mgnb$}
Let $\lambda$ denote the first Chern class of the Hodge bundle on $\Mgnb$ and $\psi_i$ denote the first Chern class of the cotangent bundle on $\Mgnb$ associated to the $i$th marked point where $1\leq i\leq n$. These classes are extensions of classes defined on $\Mgn$ that generate $\Pic(\Mgn)\otimes\QQ$. The boundary $\Delta=\Mgnb-\Mgn$ of $\Mgnb$ is codimension one. Define $\Delta_0$ as the locus of curves in $\Mgnb$ with a non-separating node. Define $\Delta_{i:S}$ for $0\leq i\leq g$, $S\subseteq \{1,...,n\}$ as the locus of curves with a separating node, separates the curve into a genus $i$ component containing the marked points from $S$ and a genus $g-i$ component containing the marked points from $S^c$, the complement of $S$. Hence we require $|S|\geq 2$ for $i=0$ and $|S|\leq n-2$ for $i=g$ and observe that $\Delta_{i:S}=\Delta_{g-i:S^c}$. These boundary divisors are irreducible and can intersect each other and self-intersect. The class of $\Delta_{i:S}$ in $\Pic(\Mgnb)\otimes\QQ$ is denoted by $\delta_{i:S}$. 

These divisor classes freely generate $\Pic(\Mgnb)\otimes \QQ$ for $g\geq 3$. For $g=2$ the classes $\lambda,\delta_0$ and $\delta_1$ generate $\Pic(\overline{\mathcal{M}}_2)\otimes \QQ$ with the relation
\begin{equation*}
\lambda=\frac{1}{10}\delta_0+\frac{1}{5}\delta_1.
\end{equation*}
Pulling back this relation under the map $\varphi:\overline{\mathcal{M}}_{2,n}\longrightarrow \overline{\mathcal{M}}_{2}$ that forgets the $n$ marked points we obtain
\begin{equation*}
\lambda=\frac{1}{10}\delta_0+\frac{1}{5}\sum_{1\in S}\delta_{1:S}.
\end{equation*}
$\Pic(\overline{\mathcal{M}}_{2,n})\otimes \QQ$ is freely generated by $\lambda,\psi_i$ and $\delta_{i:S}$ with this relation.~\cite{AC}\cite{HarrisMorrison} give an introduction to the divisor theory of $\Mgnb$.


\section{Divisors from the strata of abelian differentials}\label{divisorclass}
\begin{defn}
The divisor $D^n_\kappa$  in $\Mgnb$ for $\kappa=(k_1,...,k_m)$ with $\sum k_i=2g-2$ is defined as 
\begin{equation*}
D^n_\kappa=\overline{\{[C,p_1,...,p_n]\in\Mgn\hspace{0.15cm}|\hspace{0.15cm} [C,p_1,...,p_m]\in\mathcal{M}_{g,m} \text{ with  }  \sum k_ip_i\sim K_C   \}},
\end{equation*}
where $m=n+g-2$ or $n+g-1$ for holomorphic and meromorphic signature $\kappa $ respectively. When all $k_i$ are even this divisor has two irreducible components based on spin structure that we denote by the indices \emph{odd} and \emph{even}. Hence $D^n_\kappa$ is proportional to the pushdown of $\overline{\PPP}(\kappa)$ under the map forgetting the last $g-2$ or $g-1$ points in the holomorphic or meromorphic cases respectively. We use this notation to denote both the divisor and the class.
\end{defn}

For holomorphic signature $\kappa=(d_1,...,d_n,1^{g-2})$ with $d_i>0$, the divisors $D^n_\kappa$ correspond to generalised Brill-Noether divisors which \cite{Logan} used to investigate the Kodaira dimension of $\Mgnb$. The divisor $D^n_{\kappa}$ in $\overline{\mathcal{M}}_{g,g}$ for the signature $\kappa=(1^{2g-2})$ is extremal as it is contracted by the map to the universal Picard variety of degree $g$ line bundles~\cite{FarkasVerraU}. The closure of the anti-ramification locus or the divisor $D^n_{\kappa}$ in $\overline{\mathcal{M}}_{g,g-1}$ for the signature $\kappa=(1^{2g-4},2)$ was shown to be extremal by the construction of a covering curve with negative intersection~\cite{FarkasVerraTheta}.

The divisors of interest to us in this paper are those from meromorphic signature $\kappa$. In this section we summarise the known classes of these divisors. When all unmarked points are simple zeros the class was computed by~\cite{Muller}\cite{GruZak}\footnote{Note that in this formula $S\ne\{1,...,n\}$. In this case the coefficient is found by $S^c=\emptyset\subset S_+$. The condition on $S$ in the formula is separating the cases where all poles lie on the same component.}. For $\underline{d}=(d_1,...,d_n)$ with $\sum d_i=g-1$ and at least one $d_i<0$,
\begin{equation*}
D^n_{\underline{d},1^{g-1}}=-\lambda+\sum_{j=1}^n\begin{pmatrix}d_j+1\\2  \end{pmatrix}\psi_j
-0\cdot\delta_0-\sum_{\tiny{\begin{array}{cc}i,S\\S\subset S_+  \end{array}}}\begin{pmatrix}|d_S-i|+1\\2  \end{pmatrix}\delta_{i:S}
-\sum_{\tiny{\begin{array}{cc}i,S\\S\nsubset S_+  \end{array}}}\begin{pmatrix}d_S-i+1\\2  \end{pmatrix}\delta_{i:S}
\end{equation*}
in $\Pic(\Mgnb)\otimes \QQ$, where $S_+:=\{j\hspace{0.1cm}|\hspace{0.1cm}d_j>0\}$ and $d_S:=\sum_{j\in S}d_j$. 

In~\cite{Mullane2} the author generalises these to obtain for any fixed $g\geq 2$ and $n\geq 2$, a number of new infinite families of effective divisors from the strata of abelian differentials. The divisors relevant to our discussion are from meromorphic signatures with $n\geq3$. When the signature of unmarked points is $(2^{g-1})$ we obtain the \emph{coupled partition divisors}.
For $\underline{d}=(d_1,...,d_{n})$ such that $\sum d_i=0$ with  $\underline{d}^-\ne \{-2\}$, then
\begin{equation*}
D^{n}_{\underline{d},2^{g-1}}=2^{g-2}(2^{g+1}\lambda +2^{g-1}\sum_{j=1}^nd_j^2\psi_j-2^{g-2}\delta_0   
 -\sum_{\tiny{\begin{array}{cc}|d_S|=0\\|S|\ne n  \end{array}}}\sum_{i=0}^{g}2^{g-i+1}(2^i-1)\delta_{i:S}   
 -2^{g-1}\sum_{|d_S|\geq1}\sum_{i=0}^{g-1}d_S^2\delta_{i:S}        
 ).
\end{equation*}
If all $d_j$ are even then
\begin{equation*}
D^{n,\textit{odd}}_{\underline{d},2^{g-1}}=2^{g-2}((2^g-1)\lambda+\frac{2^g-1}{4}\sum_{j=1}^n d_j^2\psi_j-2^{g-3}\delta_0
-\sum_{\tiny{\begin{array}{cc}|d_S|=0\\|S|\ne n  \end{array}}}\sum_{i=0}^{g}(2^i-1)(2^{g-i}+1)\delta_{i:S}   -\frac{2^g-1}{4}\sum_{|d_S|\geq 2}\sum_{i=0}^{g-1}d_S^2\delta_{i:S}         )
\end{equation*}
and 
\begin{equation*}
D^{n,\textit{even}}_{\underline{d},2^{g-1}}=2^{g-2}((2^g+1)\lambda +\frac{2^g+1}{4}\sum_{j=1}^n d_j^2\psi_j -2^{g-3}\delta_0
-\sum_{|d_S|=0}\sum_{i=0}^{g}(2^i-1)(2^{g-i}-1)\delta_{i:S}     -\frac{2^g+1}{4}\sum_{|d_S|\geq 2}\sum_{i=0}^{g-1}d_S^2\delta_{i:S}         ).
\end{equation*}
For $\underline{d}=(-2,1,1)$, 
\begin{eqnarray*}
D^{3}_{\underline{d},2^{g-1}}&=&2^{g-3}(2^{g+1}\lambda +2^{g+2}\psi_1+2^{g-1}(\psi_2+\psi_3)-2^{g-2}\delta_0 -\sum_{i=0}^{g-1}2^{i+1}(2^{g-i}-1)\delta_{i\{1,2,3\}}\\
&&-\sum_{i=0}^{g-1}2^{i+1}(2^{g-i}+1)\delta_{i:\{2,3\}}
-\sum_{i=0}^{g-1}2^{g-1}(\delta_{i:\{1,2\}}+\delta_{i:\{1,3\}}).
\end{eqnarray*}

When the signature of the unmarked points is $(1^{g-2},2)$ we obtain the \emph{pinch partition divisors}. For $\underline{d}=(d_1,...,d_n)$ with $\sum d_i=g-2$, $d_j\leq -2$ and $d_i\geq0$ for $i\ne j$, then for $d_j\leq -3$,
\begin{eqnarray*}
D^{n}_{\underline{d},1^{g-2},2}&=&(26-4g)\lambda +\sum_{i=1}^n 2d_i((g-1)d_i+g-2)\psi_i-2\delta_0    +\sum_{i=0}^{g-1}c_{i:S}\delta_{i:S}
\end{eqnarray*}
where for $j\nin S$ and $d_S\leq i-1$,
\begin{equation*}
c_{i:S}=(2-2g) d_S^2+2 ( 2g i+g-4i+1) d_S-2 (g i^2+g i-3i^2 +i+1)
\end{equation*}
and for $d_S\geq i$,
\begin{equation*}
c_{i:S}=(2-2g) d_S^2  +2 (2g i-g-4i+2) d_S -2 (g i^2-3i^2-g i+4i).
\end{equation*}
For $d_j=-2$
\begin{eqnarray*}
D^{n}_{\underline{d},1^{g-2},2}&=&(27-4g)\lambda +4g\psi_j+\sum_{i\ne j} \frac{(4g(d_i+1)-5d_i-9)d_i}{2}\psi_i-2\delta_0    +\sum_{i=0}^{g-1}c_{i:S}\delta_{i:S}
\end{eqnarray*}
where for $j\nin S$ and $d_S\leq i-1$,
\begin{equation*}
c_{i:S}=\frac{1}{2}((5-4g)d_S^2+(8g i+4g-18i+3)d_S-4g i^2-4gi+13i^2-3i-4)
\end{equation*}
and for $d_S\geq i$,
\begin{equation*}
c_{i:S}=\frac{1}{2}((5-4g)d_S^2+(8g i-4g-18i+9)d_S-4g i^2+4g i+13i^2-17i).
\end{equation*}
In the case that $g=2$ the pinch partition and coupled partition divisors correspond. If further, all $d_i$ are even, the coupled partition divisor formula gives the classes of each of the irreducible components based on spin structure.

\section{Moving curves}\label{moving}
A moving curve $B$ of a projective variety $X$ is a curve with $B\cdot D\geq 0$ for all effective divisors $D$. Hence any effective divisor $D$ with zero intersection with a moving curve must lie on the boundary of the effective cone as $B\cdot(D-\epsilon A)<0$ for any ample divisor $A$ with $\epsilon>0$. One way to obtain moving curves is by fibrations. If the numerical equivalence classes of $B$ cover a Zariski dense subset of $X$ and the general curve is irreducible then $B$ is a moving curve.

Fix a meromorphic signature $\kappa$ of length $m\geq g+1$. In \S\ref{strata:abelian} we introduce the subvariety $\overline{\PPP}({\kappa})$ of $\overline{\mathcal{M}}_{g,m}$ of codimension $g$. If $\varphi:\overline{\PPP}({\kappa})\longrightarrow \overline{\mathcal{M}}_{g,m-g-1}$ forgets the first $g+1$ points, we obtain a fibration of $\overline{\PPP}({\kappa})$ with one dimensional fibres. Consider the curve $B^n_{\kappa}$ defined as
\begin{equation*}
B^n_{\kappa}:=\overline{\bigl\{ [C,p_1,...,p_n]\in\mathcal{M}_{g,n} \hspace{0.15cm}\big| \hspace{0.15cm} \text{fixed general $[C,p_{g+2},...,p_m]\in\mathcal{M}_{g,m-g-1}$ and $[C,p_{1},...,p_m]\in\PPP(\kappa)$}    \bigr\}   }.
\end{equation*}
If $\pi:\overline{\PPP}({\kappa})\longrightarrow \overline{\mathcal{M}}_{g,n}$ forgets all but the first $n$ points, then for $n\geq g+1$ we obtain $\pi_*\varphi^*[C,p_{g+2},...,p_m]=B^n_{\kappa}$. When $n\leq g$ we have $\pi_*\varphi^*[C,p_{g+2},...,p_m]=dB^n_{\kappa}$ for some positive integer $d$.

\begin{prop}
The class of $B^n_{\kappa}$ is a moving curve in $\Mgnb$ when $|\kappa|\geq n+g$ and $\PPP(\kappa)$ is irreducible.
\end{prop}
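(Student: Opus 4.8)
The plan is to verify the two conditions of the moving-curve criterion recorded at the start of this section: that curves numerically equivalent to $B^n_\kappa$ sweep out a Zariski dense subset of $\Mgnb$, and that the general such curve is irreducible. The construction $B^n_\kappa=\pi_*\varphi^*[C,p_{g+2},\dots,p_m]$ already organises these curves into the family of fibres of $\varphi\colon\overline{\PPP}(\kappa)\to\mathcal{M}_{g,m-g-1}$ read off under $\pi$, so the task is to show this family covers densely and has irreducible general member.

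For the covering statement the key observation is that $|\kappa|\geq n+g$ is exactly the condition making $\pi|_{\overline{\PPP}(\kappa)}\colon\overline{\PPP}(\kappa)\to\Mgnb$ dominant. Numerically, $\dim\overline{\PPP}(\kappa)=2g+m-3$ while $\dim\Mgnb=3g-3+n$, so $\dim\overline{\PPP}(\kappa)\geq\dim\Mgnb$ precisely when $m=|\kappa|\geq n+g$. To upgrade this to actual dominance I would argue by Abel--Jacobi: for a general $[C,p_1,\dots,p_n]\in\Mgn$ and $L:=K_C-\sum_{i=1}^n k_ip_i$, producing a point of $\overline{\PPP}(\kappa)$ over it amounts to finding $p_{n+1},\dots,p_m$ with $\sum_{i=n+1}^m k_ip_i\sim L$, i.e. to hitting $[L]$ with the map $C^{m-n}\to\Pic(C)$, $(p_{n+1},\dots,p_m)\mapsto[\sum k_ip_i]$. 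Its differential at a general point is spanned by the evaluation functionals at the $p_i$ on $H^0(C,K_C)$ (the nonzero $k_i$ merely rescale them), and since $m-n\geq g$ general points impose independent conditions on the canonical system this map is dominant, so the required completions exist. As the fibres of $\varphi$ sweep out all of $\overline{\PPP}(\kappa)$ and $\pi(\overline{\PPP}(\kappa))$ is then dense, the curves $B^n_\kappa$ cover a dense subset of $\Mgnb$; being parametrised by the irreducible base $\mathcal{M}_{g,m-g-1}$ they are all algebraically, hence numerically, equivalent, so the class of $B^n_\kappa$ is well defined.

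The remaining and most delicate point is irreducibility of the general member. Since $n\geq g+1$, the map $\pi$ restricts to an isomorphism from a general fibre $F$ of $\varphi$ onto $B^n_\kappa$ (this is why $\pi_*\varphi^*$ carries multiplicity one), so it suffices to show the general fibre of $\varphi$ is irreducible. Here $F$ is the preimage in $C^{g+1}$ of the single class $[L']\in\Pic(C)$, with $L'=K_C-\sum_{i\geq g+2}k_ip_i$, under $(p_1,\dots,p_{g+1})\mapsto[\sum_{i=1}^{g+1}k_ip_i]$; projecting to the coordinate $p_1$ exhibits $F$ as a finite cover of $C$, and I would prove connectedness of this cover by showing its monodromy acts transitively on the sheets for general $(C,L')$. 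The hypothesis that $\PPP(\kappa)$ is irreducible enters exactly at this step: it removes the a priori splitting of $F$ into pieces coming from distinct spin or hyperelliptic components, after which transitivity of the monodromy of the residual cover yields irreducibility. I expect establishing this transitivity---that no proper subcover splits off for general fixed data---to be the main obstacle, as it is the one ingredient not supplied by the dimension count or by the irreducibility hypothesis alone. Granting it, the criterion applies and $B^n_\kappa\cdot D\geq 0$ for every effective divisor $D$.
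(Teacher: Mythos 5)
Your density argument is essentially the paper's: the map $C^{m-n}\to\Pic^d(C)$, $(p_{n+1},\dots,p_m)\mapsto K_C(-\sum_{i>n}k_ip_i)$, is dominant once $m-n\geq g$, so the required completions exist for a general $[C,p_1,\dots,p_n]$ and the curves sweep out a dense subset of $\Mgnb$. That half is fine.

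The problem is in the second half, and it is one you flag yourself: you reduce the proposition to irreducibility of the general fibre of $\varphi$, i.e.\ to transitivity of the monodromy of a finite cover of $C$, and you leave that transitivity unproved. This is a genuinely delicate statement, and it is not implied by irreducibility of $\PPP(\kappa)$ alone: an irreducible family over an irreducible base can perfectly well have reducible general fibre (the monodromy then permutes the fibre components transitively while each individual fibre still splits). The paper never proves, and does not need, irreducibility of the general member. The observation you are missing is that a moving \emph{class} does not require the general curve in it to be irreducible: it suffices that all irreducible components of the general fibre have the same numerical class, since then irreducible curves of that proportional class already cover a dense subset and $B^n_\kappa\cdot D\geq 0$ follows for every pseudo-effective $D$. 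So the only scenario to exclude is a general fibre with components of \emph{different} numerical classes. Such components are distinguishable, hence can be followed consistently over the irreducible base (numerical class is locally constant in families), and the closures of the resulting distinguished pieces would decompose $\overline{\PPP}(\kappa)$ into proper closed subvarieties, contradicting the irreducibility of $\PPP(\kappa)$. That dichotomy is where the hypothesis actually enters --- not as a preliminary step before a monodromy argument --- and substituting it for your unproved transitivity claim closes the proof.
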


\begin{proof}
For any general $[C,p_1,...,p_n]\in\mathcal{M}_{g,n}$ to be in a numerical equivalence class of a curve $B^n_\kappa$ we require some $p_{n+1},...,p_m$ such that 
\begin{equation*}
\sum_{i=1}^mk_ip_i\sim K_C.
\end{equation*}
Let $d=\sum_{i=1}^nd_i$ and consider the map
\begin{eqnarray*}
\begin{array}{cccccc}
f:C^{m-n}&\longrightarrow &\Pic^{d}(C)\\
(p_{n+1},...,p_m)&\longmapsto&K_C(-\sum_{i=n+1}^md_ip_i).
\end{array}
\end{eqnarray*}
The domain has dimension $m-n$, while the target has dimension $g$. Clearly $f^{-1}(\sum_{i=1}^nd_ip_i)$ will be non-empty for general $[C,p_1,...,p_n]$ when $m\geq n+g$. Hence in these cases curves with numerical equivalence class equal to $B^n_\kappa$ cover a Zariski dense subset of $\Mgnb$.

If a general curve with numerical class $B^n_{\kappa}$ is irreducible then $B^n_{\kappa}$ is a moving curve. However, if the general $B^n_{\kappa}$ is reducible, it can also be the class of a moving curve if all components of $B^n_{\kappa}$ have the same class and are hence proportional to $B^n_{\kappa}$. 

In our case if a general curve with numerical class $B^n_{\kappa}$ is reducible with components with different class, taking the closure of these distinguishable components over all $[C,p_{n+1},...,p_m]\in\mathcal{M}_{g,m-n}$ would contradict the irreducibility of  $\PPP(\kappa)$.
\end{proof}

\boundary*

\begin{proof}
Observe $|\kappa|=n+g-1$. Fix a general curve $C$ and general points $p_{g+2},...,p_{n+g+1}$. If any point in the curve $B^n_{\kappa,1,-1}$ is also in the divisor $D^n_{\kappa}$ we require some $q_1,...,q_{g-1}\in C$ such that
\begin{equation*}
\sum_{i=1}^{n+g-1}k_ip_i   +p_{g+n}-p_{g+n+1}\sim \sum_{i=1}^{n}k_ip_i+\sum_{i=1}^{g-1}k_{i+n}q_i\sim K_C.
\end{equation*}
Hence we require
\begin{equation*}
\sum_{i=n+1}^{n+g-1}k_ip_i+p_{g+n}-p_{g+n+1}-\sum_{i=1}^{g-1}k_{i+n}q_i\sim \OO_C.
\end{equation*}
But this implies that $C$ has a cover of $\PP^1$ of degree $d=1+\sum|k_i| $ with ramification profile above $0$ and $\infty$ given by the points with positive and negative coefficients respectively in the equation above. Riemann-Hurwitz implies that in a general such cover there will be $4g-2$ other simple ramification points. Hence the dimension of the space of such covers is $4g-3$, the dimension of the moduli of $4g$ points in $\PP^1$. But as $g+1$ points are fixed general and $\dim(\mathcal{M}_{g,g+1})=4g-2$ there does not exist such $q_i$ for a general choice of $p_i$ for $i=n+1,...,g+n+1$.
\end{proof}

\section{Covering curves and the effective cone of $\Mgnb$}\label{covering}
An effective divisor $D$ in a projective variety $X$ is \emph{extremal} or spans an extremal ray in the pseudo-effective cone if the divisor class $D$ cannot be written as a sum $m_1D_1+m_2D_2$ of pseudo-effective $D_i$ with $m_1,m_2>0$ unless $D, D_1$ and $D_2$ are all proportional classes. An effective divisor $D$ is \emph{rigid} if $h^0(mD)=\dim H^0(mD)=1$ for every positive integer $m$. 

A curve $B$ contained in an effective divisor $D$ is known as a \emph{covering curve} for $D$ if irreducible curves with numerical class equal to $B$ cover a Zariski  dense subset of $D$. Negative intersection by a covering curve is a well-known criterion for an irreducible effective divisor to be extremal and rigid. 

\begin{lem}\label{lemma:covering}
If $B$ is a covering curve for an irreducible effective divisor $D$ with $B\cdot D<0$ then $D$ is extremal and rigid.
\end{lem}
\begin{proof}
~\cite[Lemma 4.1]{ChenCoskun}.  
\end{proof}

\begin{rem}
Negative intersection of a covering curve of an irreducible divisor $D$ provides further information about the structure of the pseudo-effective cone near $D$. It implies every pseudo-effective divisor in some neighbourhood of the extremal ray $D$ can be written as $\varepsilon D+ \varepsilon'D'$ for $\varepsilon>0,\varepsilon'\geq0$ and $D'$ pseudo-effective. Hence further to being extremal and rigid, the boundary of the pseudo-effective cone is locally polygonal at such a divisor $D$ and not rounded. ~\cite[\S6]{Opie} provides a proof and discussion of this fact.
\end{rem}

\begin{prop}\label{prop:covering}
If $D^n_\kappa$ is an irreducible divisor then $B^n_\kappa$ is the class of a covering curve. If $D^n_\kappa$ has two components based on spin structure then $B^{n,\text{odd}}_\kappa$ and $B^{n,\text{even}}_\kappa$ are the class of a covering curve for the odd and even components respectively.
\end{prop}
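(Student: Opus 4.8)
The plan is to realise $D^n_\kappa$ as the image of $\overline{\PPP}(\kappa)$ under a forgetful map and to exhibit the curves $B^n_\kappa$ as the images of the fibres of $\varphi$, so that as the fibre varies these images sweep out the divisor. First I would record the relevant dimension count. Since $|\kappa|=m=n+g-1$, the subvariety $\overline{\PPP}(\kappa)\subset\overline{\mathcal{M}}_{g,m}$ has dimension $2g-3+m=3g-4+n$, exactly one less than $\dim\overline{\mathcal{M}}_{g,n}=3g-3+n$. Hence the map $\pi\colon\overline{\PPP}(\kappa)\to\overline{\mathcal{M}}_{g,n}$ forgetting the last $g-1$ points is generically finite onto its image, and that image is (set-theoretically) $D^n_\kappa$. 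The containment $B^n_\kappa\subseteq D^n_\kappa$ is then immediate from the definitions: a general point $[C,p_1,\dots,p_n]$ of $B^n_\kappa$ carries forgotten points $p_{n+1},\dots,p_m$ with $\sum k_ip_i\sim K_C$, which is exactly the defining condition of $D^n_\kappa$.

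For the covering property I would argue pointwise. Every point of $\overline{\PPP}(\kappa)$ lies on a unique fibre $F$ of $\varphi$, namely the fibre over its image $[C,p_{g+2},\dots,p_m]$ in $\overline{\mathcal{M}}_{g,n-2}$, and by construction $\pi(F)$ is a curve of numerical class $B^n_\kappa$ passing through the $\pi$-image of that point. Since $\pi\colon\overline{\PPP}(\kappa)\to D^n_\kappa$ is dominant, these $\pi$-images range over a Zariski dense subset of $D^n_\kappa$; as each lies on a curve of class $B^n_\kappa$, the curves of class $B^n_\kappa$ cover a dense subset of $D^n_\kappa$.

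It remains to verify that the general member of this family is irreducible, as the definition of covering curve demands. On a fibre $F$ the curve $C$ and the points $p_{g+2},\dots,p_m$ are fixed, so the only moving points are $p_1,\dots,p_{g+1}$; because $n\geq g+1$ these are all retained by $\pi$, so $\pi|_F$ is injective and $\pi(F)$ is birational to $F$. Irreducibility of the general $B^n_\kappa$ therefore reduces to irreducibility of the general fibre of $\varphi$, which I would deduce from irreducibility of $\overline{\PPP}(\kappa)$ (equivalent, under the hypothesis, to irreducibility of $D^n_\kappa$) exactly as in the moving-curve proposition: a splitting of the general fibre into pieces of distinct class would, on taking closures over the base, produce a nontrivial decomposition of $\overline{\PPP}(\kappa)$, a contradiction. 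When all $k_i$ are even and $\PPP(\kappa)$ decomposes into its odd and even spin components, I would run the identical argument on each irreducible component $\overline{\PPP}(\kappa)^{\text{odd}}$ and $\overline{\PPP}(\kappa)^{\text{even}}$, producing the covering curves $B^{n,\text{odd}}_\kappa$ and $B^{n,\text{even}}_\kappa$ for $D^{n,\text{odd}}_\kappa$ and $D^{n,\text{even}}_\kappa$ respectively.

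The step I expect to be the main obstacle is this last one: irreducibility of the total space $\overline{\PPP}(\kappa)$ does not formally force the general fibre to be irreducible, since one must exclude a generically finite intermediate cover of the base. This is precisely where the irreducibility hypothesis on $\PPP(\kappa)$ (or the explicit description of its two spin components) is indispensable, and I would lean on the decomposition-and-closure argument already established for the moving curve $B^n_\kappa$ to convert any genuine splitting of the fibres into the desired contradiction.
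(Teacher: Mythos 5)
Your proposal is correct and follows essentially the same route as the paper: the curves of class $B^n_\kappa$ arise as images of the fibres of the fibration $\varphi$ and hence sweep out a dense subset of $D^n_\kappa$, and reducibility of the general fibre into components of distinct numerical class is ruled out by the same take-closures-and-contradict-irreducibility argument (the paper phrases the contradiction against irreducibility of $D^n_\kappa$ rather than of $\overline{\PPP}(\kappa)$, but this is immaterial). Your version simply supplies the dimension count and the injectivity of $\pi$ on fibres explicitly, which the paper leaves implicit.
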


\begin{proof}
Clearly the numerical classes of $B^n_\kappa$ cover a Zariski dense subset of $D^n_\kappa$. If the general curve $B^n_\kappa$ is reducible and components have different classes, then taking the closure of one of these components over all numerical classes of $B^n_\kappa$ would contradict the irreducibility of $D^n_\kappa$. Hence irreducible curves with class proportional to $B^n_\kappa$ cover a Zariski dense subset of $D^n_\kappa$.

This same proof holds in the case of spin structure components.
\end{proof}

By observing that for some signatures $\kappa$ the curve $B^n_\kappa$  forms a component of a specialisation of $B^n_{\kappa,-1,1}$ we obtain the negative intersection of the covering curve for the corresponding divisors $D^n_\kappa$.

\begin{prop}\label{prop:intersection}
$B^{g+1}_{\underline{d},1^{g-1}}\cdot D^{g+1}_{\underline{d},1^{g-1}}<0$ 
for $g\geq 2$ and $\underline{d}=(d_1,d_2,d_3,1^{g-2})$ with $d_1+d_2+d_3=1$ and $\sum_{d_i<0}d_i\leq-2$.
\end{prop}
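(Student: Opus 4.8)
The plan is to deduce the strict inequality from Theorem~\ref{thm:boundary} by degenerating the moving curve and reading off a boundary contribution. Set $\kappa=(\underline{d},1^{g-1})$, a meromorphic signature of length $|\kappa|=2g=(g+1)+g-1$, so that Theorem~\ref{thm:boundary} applies and gives $B^{g+1}_{\underline{d},1^{g-1},1,-1}\cdot D^{g+1}_{\underline{d},1^{g-1}}=0$. Recall $B^{g+1}_{\underline{d},1^{g-1},1,-1}$ is cut out by fixing a general curve $C$ together with general points carrying the tail entries of $\kappa$ and the appended simple zero $p_{2g+1}$ and simple pole $p_{2g+2}$. First I would specialise this fixed data by letting $p_{2g+1}$ and $p_{2g+2}$ come together. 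Since their contribution $p_{2g+1}-p_{2g+2}$ to the canonical condition becomes trivial in the limit, the condition defining the curve degenerates exactly to that of $B^{g+1}_{\underline{d},1^{g-1}}$, so this curve appears as a component of the flat limit. As specialisation preserves numerical class, $[B^{g+1}_{\underline{d},1^{g-1},1,-1}]=[B^{g+1}_{\underline{d},1^{g-1}}]+[B']$, where $B'$ collects the remaining components; combined with the vanishing above, the proposition reduces to $B'\cdot D^{g+1}_{\underline{d},1^{g-1}}>0$.

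The second step is to pin down $B'$ via the compactification of~\cite{BCGGM}, following the local model of Example~\ref{Ex2}. As $p_{2g+1}$ and $p_{2g+2}$ collide they bubble onto a rational tail carrying a zero and a prescribed pole; because that tail carries a pole its residue at the attaching node is nonzero, and the global residue condition is relaxed. This is exactly the mechanism isolated in Example~\ref{Ex2}: the surviving boundary configurations are those in which the weighted points $p_1,p_2,p_3$ collide onto a rational tail $Y$, and \emph{every} cross-ratio configuration of $Y$ is then smoothable. I would identify $B'$ as the resulting family of such tail configurations, observing that the genus-$g$ part is rigid — the attaching node together with the remaining marked points form the unique effective divisor of the fixed degree-$(g-1)$ system $|K_C(-\textstyle\sum\text{fixed})|$ — so that the only modulus is the position of $p_1,p_2,p_3$ on $Y$, making $B'$ a union of $\overline{\mathcal{M}}_{0,4}$-curves.

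The third step is the intersection computation. On such a cross-ratio family one has $B'\cdot\lambda=0$ and $B'\cdot\delta_0=0$, while the surviving intersections are the standard ones $B'\cdot\psi_j=1$ for $j\in\{1,2,3\}$ (and $0$ at the rigid points), $B'\cdot\delta_{0:\{1,2,3\}}=-1$, and $B'\cdot\delta_{0:\{i,j\}}=1$ for each pair $\{i,j\}\subset\{1,2,3\}$. Feeding these into the explicit class of $D^{g+1}_{\underline{d},1^{g-1}}$ from~\cite{Muller}\cite{GruZak} recorded in \S\ref{divisorclass}, and using $d_1+d_2+d_3=1$, turns $B'\cdot D^{g+1}_{\underline{d},1^{g-1}}$ into an explicit expression in the $d_i$ via the binomial coefficients $\binom{d_S+1}{2}$. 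The hypothesis $\sum_{d_i<0}d_i\le -2$ — which in particular excludes $\{d_i\}=\{1,1,-1\}$, the case barred in Example~\ref{Ex2} — is what I expect to force this expression to be strictly positive, giving $B^{g+1}_{\underline{d},1^{g-1}}\cdot D^{g+1}_{\underline{d},1^{g-1}}=-\,B'\cdot D^{g+1}_{\underline{d},1^{g-1}}<0$.

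I expect the main obstacle to be steps two and three together: correctly enumerating \emph{all} boundary components of the flat limit and computing each intersection with $D^{g+1}_{\underline{d},1^{g-1}}$. The computation is genuinely delicate because the naive single tail contribution can cancel against the $\psi$-contributions; the positivity hinges on the precise coefficients of the boundary divisors in the divisor class, and in particular on the convention for the extremal-index divisors where the genus-$g$ component carries none (or all) of the relevant points, whose coefficient must be read off from the complementary set $S^c$. It is exactly this bookkeeping that should make the final sum strictly positive rather than zero, so care in matching the boundary geometry of $B'$ with the terms of the Müller–Grushevsky–Zakharov formula is where the argument must be carried out with precision.
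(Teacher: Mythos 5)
Your reduction is exactly the paper's: you invoke Theorem~\ref{thm:boundary}, specialise the fixed data of $B^{g+1}_{\underline{d},1^{g-1},1,-1}$ so that the appended pair $(1,-1)$ bubbles onto a rational tail $X$, decompose the special fibre as $B^{g+1}_{\underline{d},1^{g-1}}$ plus a boundary curve $\delta(B^{g+1}_{\underline{d},1^{g-1},1,-1})$ supported on configurations where $p_1,p_2,p_3$ move on a tail $Y$ (the mechanism of Example~\ref{Ex2}), and reduce to showing that this boundary curve meets $D^{g+1}_{\underline{d},1^{g-1}}$ positively. Where you diverge is in how that positivity is established, and this is where your proposal has a genuine gap. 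The paper never touches the class formula: it observes that a point of the boundary curve lies in $D^{g+1}_{\underline{d},1^{g-1}}$ exactly when the differential $\omega_Y$ on the tail, with $(\omega_Y)=d_1p_1+d_2p_2+d_3p_3-3y$ and no poles on $C$, satisfies $\res_y(\omega_Y)=0$, as in Example~\ref{Ex1}; this locus is finite and nonempty, and positivity of the intersection number is then immediate. The hypothesis $\sum_{d_i<0}d_i\leq -2$ enters here and only here: it excludes $\{d_1,d_2,d_3\}=\{1,1,-1\}$, the one case in which $Y$ carries a single simple pole, which would force $\res_y(\omega_Y)$ to equal minus the (nonzero) residue at that simple pole and hence empty the intersection.

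Your third step, by contrast, does not close as set up. With the intersection numbers you list ($\psi_j\mapsto 1$ for $j\leq 3$, $\delta_{0:\{i,j\}}\mapsto 1$, $\delta_{0:\{1,2,3\}}\mapsto -1$, everything else zero) and $d_1+d_2+d_3=1$, the resulting expression
\begin{equation*}
\sum_{j=1}^{3}\binom{d_j+1}{2}-\sum_{i<j}\binom{d_i+d_j+1}{2}+\binom{d_1+d_2+d_3+1}{2}=\sum_{j=1}^{3}(2d_j-1)+1=0
\end{equation*}
telescopes to zero using only $\sum d_i=1$ and nothing about the signs of the $d_i$. So the strict positivity you hope to extract from the binomial coefficients is not present in the main terms; whatever positivity survives must come from the edge cases of the coefficient conventions (the $S^c$ convention when $\{1,2,3\}=\{1,\ldots,n\}$, i.e.\ $g=2$; the $d_S=0$ coefficients; possible multiplicities of the components of the special fibre; and the extra tail components of the boundary curve that arise when some $d_i=1$, a case your enumeration omits but your hypotheses allow). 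You correctly flag this bookkeeping as the danger, but you have not resolved it, and your stated expectation for where the hypothesis $\sum_{d_i<0}d_i\leq-2$ acts is misplaced: it guarantees nonemptiness of the boundary intersection via the residue condition, not positivity of a binomial expression. The repair is to drop the class computation and argue positivity geometrically through the global residue condition, as the paper does.
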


\begin{proof}
Theorem~\ref{thm:boundary} implies
\begin{equation*}
B^{g+1}_{\underline{d},1^{g-1},1,-1}\cdot D^{g+1}_{\underline{d},1^{g-1}}=0
\end{equation*}
As presented in \S\ref{moving}, curves linearly equivalent to $B^{g+1}_{\underline{d},1^{g-1},1,-1}$ are constructed by a fibration of $\overline{\PPP}(\underline{d},1^{g-1},1,-1)$ over $\overline{\mathcal{M}}_{g,g+1}$. Consider the special fibre obtained for $[C,p_{g+2},...,p_{2g+2}]\in\overline{\mathcal{M}}_{g,g+1}$ with $p_{g=2},...,p_{2g}$ general points in $C$ while $p_{2g+1}$ and $p_{2g+2}$ sit on a $\PP^1$ tail we denote $X$ above a general point $x$ on the curve $C$. To find the components of the curve we describe the fibre by finding all possible limit canonical divisors of this type.

\begin{figure}[htbp]
\begin{center}
\begin{overpic}[width=1\textwidth]{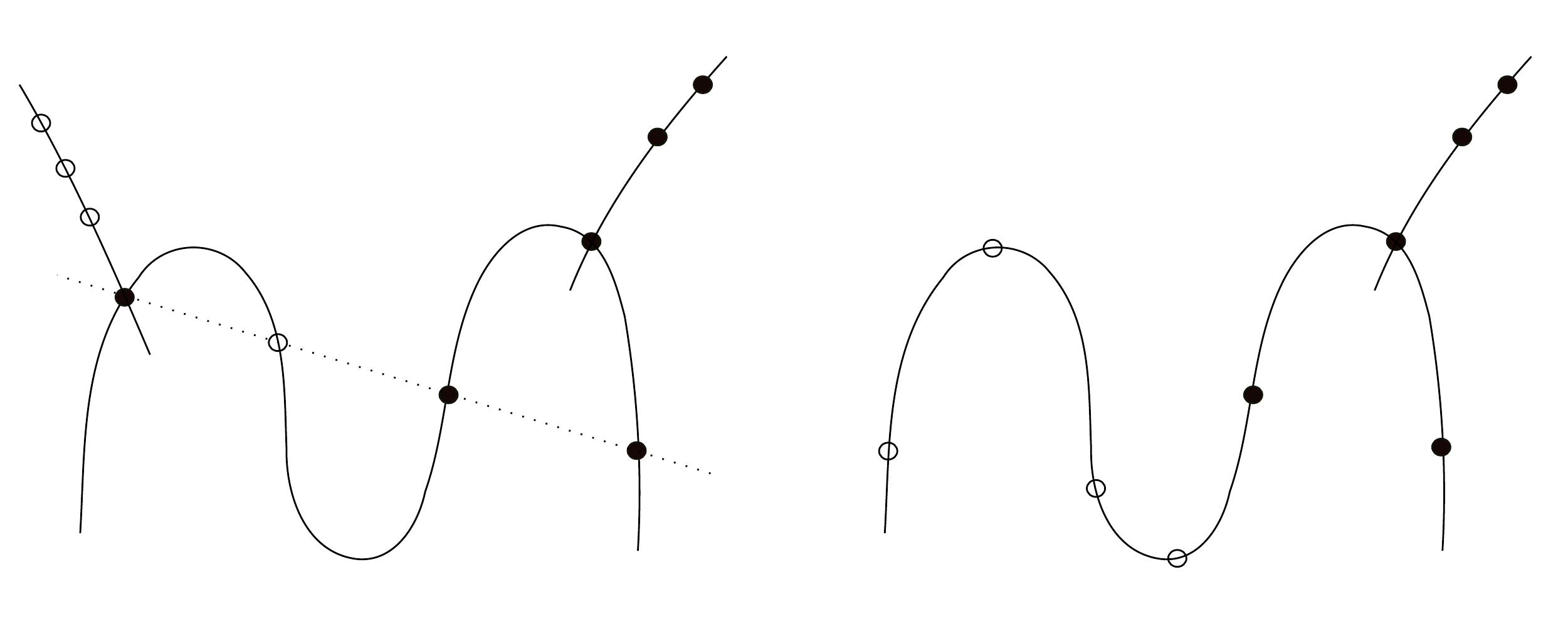}
\put (2,8){$C$}
\put (10,30){$Y\cong \PP^1$}
\put (4,33){$p_1$}
\put (5.5,30){$p_2$}
\put (7,26.5){$p_3$}
\put (5,20.5){$y$}
\put (15,17){$p_4$}
\put (25.5,14){$p_5$}
\put (38,10){$p_6$}
\put (41,27){$X\cong \PP^1$}
\put (39,32){$p_7$}
\put (41.5,35){$p_8$}
\put (37,27){$x$}
\put (13,0){$y+p_4+p_5+p_6\sim K_C$}

\put (53,8){$C$}
\put (58,11.5){$p_1$}
\put (63,27){$p_2$}
\put (66.5,9){$p_3$}
\put (74.5,7){$p_4$}
\put (76.5,14){$p_5$}
\put (89,10){$p_6$}
\put (92,27){$X\cong \PP^1$}
\put (90,32){$p_7$}
\put (92.5,35){$p_8$}
\put (88,27){$x$}
\put (55,0){$d_1p_1+d_2p_2+d_3p_3+p_4+p_5+p_6\sim K_C$}

\put (13,-7){\footnotesize{Figure 7: Two components of $B^4_{d_1,d_2,d_3,1^4,-1}$ in $\overline{\mathcal{M}}_{3,4}$ when $p_7$ and $p_8$ sit on a $\PP^1$ tail.}}
\end{overpic}
\vspace{0.6cm}
\end{center}
\end{figure}

The limit canonical divisors of interest are of two different types. We denote by $\delta(B^{g+1}_{\underline{d},1^{g-1},1,-1})$ the curve contained in the boundary of $\overline{\mathcal{M}}_{g,g+1}$. This is the curve created by three $p_i$ moving freely on a $\PP^1$-tail. Consider $d_i\ne 1$ for $i=1,2,3$. In this case $\delta(B^{g+1}_{\underline{d},1^{g-1},1,-1})$  is the curve created by the points $p_1,p_2,p_3$ moving freely on a $\PP^1$-tail we denote $Y$ attached to the curve $C$ at a point $y$. The resulting twisted canonical divisor is of the form
\begin{eqnarray*}
y+\sum_{i=4}^{2g}p_i &\sim& K_C\\
p_{2g+1}-p_{2g+2}-2x&\sim& K_X\\
d_1p_1+d_2p_2+d_3p_3-3y&\sim& K_Y.
\end{eqnarray*}
The pointed nodal curve in the left of Figure 7 shows this situation for $g=3$ which is presented in detail in Example~\ref{Ex2}. In the general genus case on a general curve $C$ we have $h^0(K_C-p_{g+2}-...-p_{2g})=1$ providing $(g-1)!$ solutions for $y,p_4,...,p_{g+1}$. We are left to check the global residue condition to show that such twisted canonical divisors are smoothable. Observe that due to the simple pole at $p_{2g+2}$, for any differential $\omega_X$ on $X$ with $(\omega_X)\sim p_{2g+1}-p_{2g+2}-2x$ we have $\res_x(\omega_X)=-\res_{p_{2g+2}}(\omega_X)\ne 0$. Hence Figure 8 gives the two possible level graphs to provide smoothable twisted canonical divisors of this type. Graph A gives the condition
\begin{equation*}
\res_x(\omega_X)+\res_y(\omega_Y)=0,
\end{equation*}
while Graph B gives the condition
\begin{equation*}
\res_y(\omega_Y)=0.
\end{equation*}
Hence all configurations of $y,p_1,p_2,p_3$ on the rational tail $Y$ are included. In the case that some $d_i=1$, say $d_3=1$, we have  $\delta(B^{g+1}_{\underline{d},1^{g-1},1,-1})$ will include more components where $p_1,p_2$ and $p_i$ move freely on a $\PP^1$-tail for $i=3,...,g+1$.
\begin{figure}[htbp]
\begin{center}
\vspace{1cm}
\begin{overpic}[width=0.7\textwidth]{LevelgraphBnd}

\put (19,40){$C$}
\put (0,6){$X$}
\put (38,6){$Y$}
\put (13,47){Graph A}

\put (78.5,40){$C$}
\put (59,6){$X$}
\put (97,17){$Y$}
\put (72,47){Graph B}

\put (11,-0){\footnotesize{Figure 8: Level graphs giving the global residue condition.}}
\end{overpic}
\end{center}
\end{figure}

The other candidate for limit twisted canonical divisor in this special fibre are twisted canonical divisors of the form
\begin{eqnarray*}
d_1p_1+d_2p_2+d_3p_3+\sum_{i=4}^{2g}p_i &\sim& K_C\\
p_{2g+1}-p_{2g+2}-2x&\sim& K_X,
\end{eqnarray*}
where the points $p_1,...,p_{g+1}$ vary in a one dimensional family in the curve $C$. The pointed curve in the right of Figure 7 shows this situation for $g=3$. As there are poles on both components the global residue condition is empty and all solutions of this type are smoothable. Hence this component is simply the curve $B^{g+1}_{{\underline{d},1^{g-1}}}$.

We obtain
\begin{equation*}
B^{g+1}_{\underline{d},1^{g-1},1,-1}\sim \delta(B^{g+1}_{\underline{d},1^{g-1},1,-1})+B^{g+1}_{\underline{d},1^{g-1}}.
\end{equation*}
Observe $\delta(B^{g+1}_{\underline{d},1^{g-1},1,-1})$ intersects $D^{g+1}_{\underline{d},1^{g-1}}$ at the finite set of points where the residue at $y$ is zero as the level graph for these solutions contains only two components with no poles on $C$. In Example~\ref{Ex1} this situation is explained in detail. This implies
\begin{equation*}
 \delta(B^{g+1}_{\underline{d},1^{g-1},1,-1})\cdot D^{g+1}_{\underline{d},1^{g-1}}>0
\end{equation*} 
and hence
\begin{equation*}
 B^{g+1}_{\underline{d},1^{g-1}}\cdot D^{g+1}_{\underline{d},1^{g-1}}<0.
\end{equation*} 
\end{proof}

\extremal*

\begin{proof}
$\PPP(d_1,d_2,d_3,1^{2g-3})$ is irreducible, hence the divisor $D^{g+1}_{\underline{d},1^{g-1}}$ is irreducible and Proposition~\ref{prop:covering} and Proposition~\ref{prop:intersection} give a covering curve for the divisors with negative intersection. Hence by Lemma~\ref{lemma:covering} the divisors are extremal and rigid.

For $n>g+1$ where $\varphi:\Mgnb\longrightarrow \overline{\mathcal{M}}_{g,g+1}$ forgets all but the first $g+1$ points we have $\varphi^*D^{g+1}_{\underline{d},1^{g-1}}$ is irreducible as the pullback of an irreducible divisor. Further $B^{n}_{\underline{d},0^{n-g-1},1^{g-1}}$ provides a covering curve for $\varphi^*D^{g+1}_{\underline{d},1^{g-1}}$ with 
\begin{equation*}
\varphi_*B^{n}_{\underline{d},0^{n-g-1},1^{g-1}}=B^{g+1}_{\underline{d},1^{g-1}}
\end{equation*}
and by the projection formula
\begin{equation*}
B^{n}_{\underline{d},0^{n-g-1},1^{g-1}}\cdot \varphi^*D^{g+1}_{\underline{d},1^{g-1}}=\varphi_*B^{n}_{\underline{d},0^{n-g-1},1^{g-1}}\cdot D^{g+1}_{\underline{d},1^{g-1}}=B^{g+1}_{\underline{d},1^{g-1}}\cdot D^{g+1}_{\underline{d},1^{g-1}}<0.
\end{equation*}
Hence $\varphi^*D^{g+1}_{\underline{d},1^{g-1}}$ are extremal and rigid.
\end{proof}

This strategy is a generalisation of that used by~\cite{ChenCoskun} in genus $g=1$. As a result of the translation automorphism on any elliptic curve, for $g=1$ this strategy provides a covering curve with negative intersection for the irreducible divisors $D^3_{d_1,d_2,-d_1-d_2}$ in $\overline{\mathcal{M}}_{1,3}$ for $\gcd(d_1,d_2)=1$.


A natural question to ask is whether this behaviour of multiple components in the special fibre of a fibration can be used to show the two spin structure components of a reducible divisor $D^n_\kappa$ to be extremal and rigid. Consider $D^{g+1}_\kappa=D^{g+1,\text{odd}}_\kappa+D^{g+1,\text{even}}_\kappa$ for $\kappa=(d_1,...,d_{g+1},2^{g-1})$ for $d_i$ even and nonzero. Then
\begin{equation*}
B^{g+1}_{\kappa,1,-1}\cdot D^{g+1}_\kappa=0,
\end{equation*}
and further by considering the same specialisation of this curve we obtain
\begin{equation*}
B^{g+1}_{\kappa,1,-1}\sim B^{g+1,\text{odd}}_{\kappa}+B^{g+1,\text{even}}_{\kappa}.
\end{equation*}
Although these curves provide covering curves for the components of the divisor we unfortunately have
\begin{equation*}
B^{g+1,\text{even}}_\kappa\cdot D^{g+1,\text{odd}}_\kappa=B^{g+1,\text{odd}}_\kappa\cdot D^{g+1,\text{even}}_\kappa=0,
\end{equation*}
which yields 
\begin{equation*}
B^{g+1,\text{odd}}_\kappa\cdot D^{g+1,\text{odd}}_\kappa=B^{g+1,\text{even}}_\kappa\cdot D^{g+1,\text{even}}_\kappa=0.
\end{equation*}

A projective variety $X$ is a Mori dream space~\cite{HuKeel} if the Cox ring of $X$ is finitely generated and the Neron-Severi group of $X$ is $\Pic(X)\otimes\QQ$. In this case the nef and semi-ample cones of divisors on $X$ correspond and $\Eff(X)$ is rational polyhedral and has a specific chamber decomposition. Restricting to the open cases in $g=2$, Theorem~\ref{thm:extremal} implies the following.

\moridreamspace*


\bibliographystyle{plain}
\bibliography{base}
\end{document}